\documentclass[11pt]{amsart}
\usepackage[english]{babel}
\usepackage{amsthm} 
\usepackage{amssymb}
\usepackage{amsmath}
\usepackage{hyperref}
\usepackage{amsfonts}
\usepackage{verbatim}
\usepackage{stmaryrd}
\usepackage{fancyhdr}
\usepackage{mathrsfs}
\usepackage{graphicx}
\usepackage{pdfsync}
\newtheorem{theorem}{Theorem}[section]

\newtheorem{lemma}[theorem]{Lemma}
\newtheorem{proposition}[theorem]{Proposition}

\numberwithin{equation}{section}
\newcommand{\N}{\mathbb{N}}
\newcommand{\nn}{\mathbb{N}}
\newcommand{\R}{\mathbb{R}}
\newcommand{\rr}{\mathbb{R}}
\newcommand{\CC}{\mathbb{C}}
\newcommand{\cc}{\mathbb{C}}
\newcommand{\lv}{\left[}
\newcommand{\rv }{\right]}
\newcommand{\eps}{\varepsilon}
\newcommand {\be}{\begin{equation}}
\newcommand {\ee}{\end{equation}}
\newcommand {\ba}{\begin{array}}
\newcommand {\ea}{\end{array}}

\usepackage{varioref}
 
\def\Rom#1{\uppercase\expandafter{\romannumeral #1}}

\def\polhk#1{\setbox0=\hbox{#1}{\ooalign{\hidewidth \lower1.5ex\hbox{`}\hidewidth\crcr\unhbox0}}} 

\begin{document}

\title[Pseudospectra and analyticity sector of a Ornstein-Uhlenbeck semigroup]{
Pseudospectral phenomena preventing the analyticity sector of a Ornstein-Uhlenbeck semigroup to be ruled by the spectral angle of its generator}

\author{Karel Pravda-Starov}

\address{\noindent \textsc{Karel Pravda-Starov, Univ Rennes, CNRS, IRMAR - UMR 6625, F-35000 Rennes, France}}
\email{karel.pravda-starov@univ-rennes.fr}
\address{\noindent \textsc{Karel Pravda-Starov, Abraham de Moivre International Research Laboratory, CNRS - Imperial College London, Imperial College London, South Kensington, London SW7 2AZ, UK}}
\email{k.pravda-starov@imperial.ac.uk}

\keywords{Ornstein-Uhlenbeck operators, analytic semigroups, sector of analyticity, pseudospectrum} 
\subjclass[2000]{47D06, 35P05}

\begin{abstract}
We study elliptic Ornstein-Uhlenbeck operators in $L^2$ spaces with respect to invariant measures. These operators have discrete spectra and generate analytic semigroups. The angle defining the analyticity sector of a Ornstein-Uhlenbeck semigroup is not always equal to the complementary angle of the spectral angle of its generator. 
In this work, we study the link between these two angles. More specifically, we provide a necessary and sufficient criterion ensuring that they are complementary, and explain how pseudospectral phenomena account for these two angles to fail to be complementary. 
\end{abstract}

\maketitle

\section{Introduction}

\subsection{Ornstein-Uhlenbeck operators}

We consider elliptic Ornstein-Uhlenbeck operators
\begin{equation}\label{jen0}
P=\frac{1}{2}\sum_{i,j=1}^{n}q_{i,j}\partial_{x_i,x_j}^2+\sum_{i,j=1}^nb_{i,j}x_j\partial_{x_i}=\frac{1}{2}\textrm{Tr}(Q\nabla_x^2)+\langle Bx,\nabla_x\rangle, \quad x \in \rr^n,
\end{equation}
where $Q=(q_{i,j})_{1 \leq i,j \leq n}$ and $B=(b_{i,j})_{1 \leq i,j \leq n}$ are real $n \times n$-matrices, with $Q \gg 0$ symmetric positive definite. In the following, we denote $\langle A,B \rangle$ and $|A|^2$ the scalar operators
\begin{equation}\label{not}
\langle A,B\rangle=\sum_{j=1}^nA_jB_j, \quad |A|^2=\langle A,A\rangle=\sum_{j=1}^nA_j^2,
\end{equation}
when $A=(A_1,...,A_n)$ and $B=(B_1,...,B_n)$ are vector-valued operators. Notice that $\langle A,B\rangle\neq\langle B,A\rangle$ in general, since e.g., 
$\langle \nabla_x,Bx\rangle=\langle Bx,\nabla_x\rangle +\textrm{Tr}(B).$ 
The associated Ornstein-Uhlenbeck semigroup $(T(t))_{t \geq 0}$ has the following explicit representation
\begin{equation}\label{gen}
(T(t)f)(x)=\frac{1}{(2\pi)^{\frac{n}{2}}\sqrt{\det Q_t}}\int_{\rr^n}e^{-\frac{1}{2}\langle Q_t^{-1}y,y\rangle}f(e^{tB}x-y)dy, \quad t>0,
\end{equation}
with
\begin{equation}\label{pav0}
Q_t=\int_0^{t}e^{sB}Qe^{sB^T}ds,
\end{equation}
where $B^T$ denotes the transpose matrix of $B$.
This formula is due to Kolmogorov~\cite{kolmo}. On the other hand, the existence of an invariant measure $\mu$ for the semigroup $(T(t))_{t \geq 0}$, that is, a probability measure on $\rr^n$ verifying 
$$\forall t \geq 0, \forall f \in C_b(\rr^n), \quad \int_{\rr^n}(T(t)f)(x)d\mu(x)=\int_{\rr^n}f(x)d\mu(x),$$
where $C_b(\rr^n)$ stands for the space of continuous and bounded functions on $\rr^n$, is known to be equivalent~\cite[Section~11.2.3]{DaPrato}
to the following localization of the spectrum of~$B$,
\begin{equation}\label{pav1}
\sigma(B) \subset \mathbb{C}_-=\{z \in \mathbb{C} : \textrm{Re }z<0\}.
\end{equation}
When this condition holds, the invariant measure is unique and is given by $d\mu(x)=\rho(x)dx$, where the density with respect to the Lebesgue measure is
\begin{equation}\label{pav2}
\rho(x)=\frac{1}{(2\pi)^{\frac{n}{2}}\sqrt{\det Q_{\infty}}}e^{-\frac{1}{2}\langle Q_{\infty}^{-1}x,x\rangle},
\end{equation}
with 
\begin{equation}\label{pav3}
Q_{\infty}=\int_0^{+\infty}e^{sB}Qe^{sB^T}ds.
\end{equation}
It is known from~\cite{fuhrman,Metafune_al2002} that Ornstein-Ulhenbeck semigroups extend to strongly continuous one-parameter semigroups of contractions on $L^p_{\mu}=L^p(\rr^n,d\mu)$ spaces, $1 \leq p<+\infty$, with respect to invariant measures, which are analytic when $1<p<+\infty$, but not for $p=1$.

In the work~\cite{sector}, Chill, Fa\v{s}angov\'a, Metafune and Pallara study the sector of analyticity of elliptic Ornstein-Uhlenbeck semigroups in $L_{\mu}^p$ spaces. 
They prove that the angle of the analyticity sector $0<\theta_p \leq \frac{\pi}{2}$ is given by
$$\textrm{cotan }\theta_p=\frac{\sqrt{(p-2)^2+p^2(\textrm{cotan }\theta_2)^2}}{2\sqrt{p-1}},$$
when $1<p<+\infty$, where the angle $0<\theta_2 \leq \frac{\pi}{2}$ corresponding to the $L^2_{\mu}$ case, is determined by the formula
\begin{equation}\label{rt2}
\textrm{cotan }\theta_2=\|I_n+2Q^{-1/2}Q_{\infty}B^TQ^{-1/2}\|.
\end{equation}
Here $\|\cdot\|$ denotes the spectral norm $\|\cdot\|_2$, that is, the matrix norm induced by the Euclidean norm on $\rr^n$.
The result of~\cite{sector} (Theorem~2, see also Remark~6) shows that $(T(t))_{t \geq 0}$ the Ornstein-Uhlenbeck semigroup on $L_{\mu}^p$, with $1<p<+\infty$, extends to an analytic contraction semigroup on the angular sector
$$\Sigma_{\theta_p}:=\{z \in \cc^* : |\arg z| <\theta_p\} \cup \{0\},$$
that is, $T$ extends as a family of bounded operators $(T(z))_{z \in \Sigma_{\theta_p}}$ such that the mapping $z \in \Sigma_{\theta_p}\setminus \{0\} \mapsto T(z) \in \mathcal{L}(L_{\mu}^p)$ is analytic in $\Sigma_{\theta_p}\setminus \{0\}$ satisfying
$$\forall z_1,z_2 \in \Sigma_{\theta_p}, \quad T(z_1+z_2)=T(z_1)T(z_2),$$
$$\forall 0<\eps<\theta_p,\exists M_{\eps}>0,\forall z \in \Sigma_{\theta_p-\eps},  \quad  \|T(z)\|_{\mathcal{L}(L_{\mu}^p)} \leq M_{\eps}$$
and 
$$\forall 0<\eps<\theta_p, \forall f \in L_{\mu}^p, \quad  \lim_{\substack{z \to 0\\ z \in \Sigma_{\theta_p-\eps}\setminus \{0\}}}T(z)f=f \textrm{ in } L_{\mu}^p.$$ 
Furthermore, it is shown that the angle $\theta_p$ is sharp in the sense that, if $(T(t))_{t \geq 0}$ extends to an analytic semigroup on the angular sector $\Sigma_{\theta}$ for $0<\theta \leq \frac{\pi}{2}$, then $\theta \leq \theta_p$. 
As noticed in~\cite{sector} (Remark~2), the spectrum of the matrix 
$$Q^{-1/2}Q_{\infty}B^TQ^{-1/2},$$ 
determines the analyticity sector of the Ornstein-Uhlenbeck semigroup since the spectral norm of the skew-adjoint matrix 
\begin{equation}\label{skew}
A=I_n+2Q^{-1/2}Q_{\infty}B^TQ^{-1/2},
\end{equation} 
is equal to its spectral radius
$$\|I_n+2Q^{-1/2}Q_{\infty}B^TQ^{-1/2}\|=r(I_n+2Q^{-1/2}Q_{\infty}B^TQ^{-1/2}).$$
Indeed, the skew-adjointness property $A^T=-A$ directly comes from the definitions (\ref{pav0}) and (\ref{pav3}),
$$\forall t \geq 0, \quad Q_{\infty}=Q_t+e^{tB}Q_{\infty}e^{tB^T},$$
which implies that 
\begin{equation}\label{ai4}
0=\frac{d}{dt}(Q_t+e^{tB}Q_{\infty}e^{tB^T})|_{t=0}=Q+BQ_{\infty}+Q_{\infty}B^T=\frac{1}{2}Q^{1/2}(A+A^T)Q^{1/2}.
\end{equation}
On the other hand, the spectrum of the elliptic Ornstein-Uhlenbeck operator (\ref{jen0}) acting on the $L_{\mu}^p$ space, with $1 \leq p<+\infty$, has been explicitly described by Metafune, Pallara and Priola in~\cite{Metafune_al2002}. When $1 < p<+\infty$, the spectrum of the Ornstein-Uhlenbeck operator $(P,D_p)$ considered as the infinitesimal generator of the Ornstein-Uhlenbeck semigroup $(T(t))_{t \geq 0}$ in $L_{\mu}^p$ defined in (\ref{gen}), is discrete and is only composed by eigenvalues. It is independent of the parameter $1<p<+\infty$ and is solely determined by the spectrum of the drift matrix $B$,
\begin{equation}\label{rt4}
\sigma(P)=\Big\{\sum_{\lambda \in \sigma(B)}\lambda k_{\lambda} :\ k_{\lambda} \in \mathbb{N}\Big\} \subset \{z \in \cc : \textrm{Re }z \leq 0\},
\end{equation}
where $\mathbb{N}$ denotes the set of non-negative integers. 
The above localization of the spectrum of $P$ follows from the localization of the spectrum of the drift matrix (\ref{pav1}). 
This result actually holds for Ornstein-Uhlenbeck operators which are merely hypoelliptic~\cite{Metafune_al2002} (Theorem~3.1). In the $L_{\mu}^2$ case, this result may also be recovered from the singular space theory developed for degenerate non-selfadjoint quadratic operators~\cite{HPS,OPPS2}. When $p=1$, the spectrum of $P$ in the $L_{\mu}^1$ space is the closed left half-plane
$$\sigma(P)=\{z \in \cc : \textrm{Re }z \leq 0\}.$$
In this case, each complex number $z$ with a negative real part $\textrm{Re }z<0$ is actually then an eigenvalue~\cite{Metafune_al2002} (Theorem~5.1). According to the characterization recalled below (\ref{dani1}), this accounts for the lack of analyticity of the Ornstein-Uhlenbeck semigroup when $p=1$.

The description of the spectrum (\ref{rt4}) indicates that the spectral angle $\theta_{\textrm{spec}}$ of the Ornstein-Uhlenbeck operator $P$ is equal to the spectral angle of the matrix $B$, 
\begin{equation}\label{dan3}
0 \leq \theta_{\textrm{spec}}:=\textrm{arctan}\Big(\sup_{z \in \sigma(B)}\Big|\frac{\textrm{Im }z}{\textrm{Re }z}\Big|\Big) < \frac{\pi}{2},
\end{equation}
that is, the smallest angle $0 \leq \theta <\frac{\pi}{2}$ satisfying $\sigma(P) \subset -\overline{\Sigma}_{\theta}$ is actually equal to the smallest angle $0 \leq \theta <\frac{\pi}{2}$ satisfying $\sigma(B) \subset -\overline{\Sigma}_{\theta}$.
\begin{figure}[ht]
\caption{Spectrum of the Ornstein-Uhlenbeck operator $P$.}
\centerline{\includegraphics[scale=0.75]{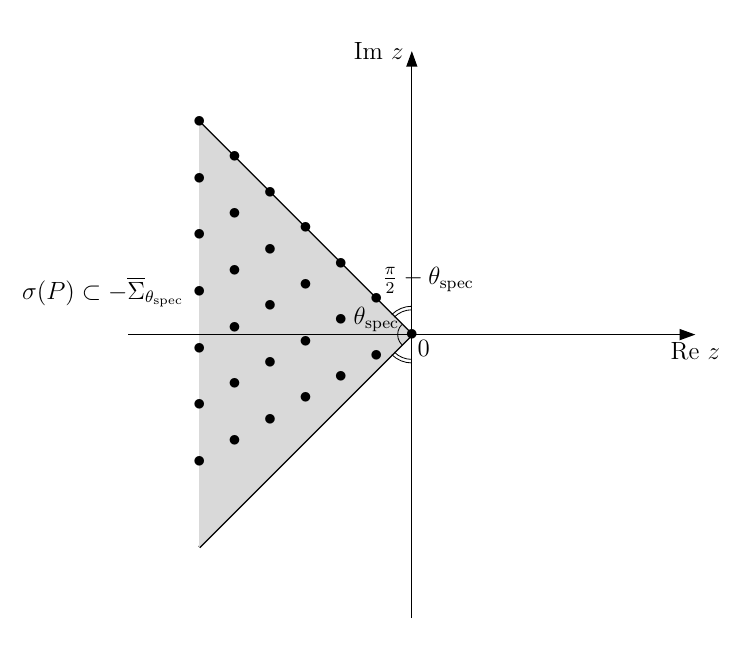}}
\end{figure}
The standard theory of analytic semigroups (see e.g. Chapter~9, paragraph~6 in~\cite{kato} and chapter~2 in~\cite{lunardi}) implies that the angle of the analyticity sector $\theta_p$ satisfies the bound
$$0<\theta_p +\theta_{\textrm{spec}}\leq \frac{\pi}{2}.$$
Indeed, the analyticity of the semigroup $(T(t))_{t \geq 0}$ in the angular sector $\Sigma_{\theta}$, with $0<\theta \leq \frac{\pi}{2}$, is characterized by the following conditions for its infinitesimal generator
\begin{multline}\label{dani1}
\exists \omega \in \rr, \quad \Big\{z \in \cc : z \neq \omega, \ |\textrm{arg}(z-\omega)|<\frac{\pi}{2}+\theta\Big\} \subset \cc \setminus \sigma(P) ; \\ 
\forall 0<\eps<\theta, \exists M_{\eps}>0,  \forall z \in \cc,\ z \neq \omega, \ |\textrm{arg}(z-\omega)|<\frac{\pi}{2}+\theta-\eps, \quad \|(P-zI)^{-1}\| \leq \frac{M_{\eps}}{|z-\omega|}.
\end{multline}  
The authors of the work~\cite{sector} were surprised to notice that there are situations when the angle of the analyticity sector of the Ornstein-Uhlenbeck semigroup is not equal to the complementary angle of the spectral angle of its generator
$$0<\theta_p+\theta_{\textrm{spec}} < \frac{\pi}{2}.$$ 
A two-dimensional example of this situation is given in~\cite{sector} (Example~1) for $Q=I_2$ and 
$$B=\left(\begin{array}{cc}
-\delta& \lambda \\
0 & -\delta      
\end{array}\right),$$
with $\delta>0$, $\lambda \in \rr^*$. In this case, the spectral angle $\theta_{\textrm{spec}}=0$ is zero, since
$\sigma(B)=\{-\delta\} \subset \rr_-$, and we indeed have
$$0<\theta_2+\theta_{\textrm{spec}}=\theta_2=\frac{\pi}{2}-\textrm{arctan}\Big(\frac{2\delta}{|\lambda|}\Big)<\frac{\pi}{2},$$
since 
\begin{equation}\label{dan2}
Q_{\infty}=\int_0^{+\infty}e^{-2\delta s}\left(\begin{array}{cc}
1 & s \lambda \\
0 & 1      
\end{array}\right)\left(\begin{array}{cc}
1 & 0 \\
s \lambda & 1      
\end{array}\right)ds=\frac{1}{4\delta^3}\left(\begin{array}{cc}
2\delta^2+\lambda^2 & \lambda \delta \\
\lambda \delta  & 2\delta^2      
\end{array}\right)
\end{equation}
and
$$Q_{\infty}B^T=-\frac{1}{4\delta}\left(\begin{array}{cc}
    2\delta       & \lambda  \\
    -\lambda       & 2\delta
    \end{array}\right) \ \textrm{ and } \sigma(I_2+2Q_{\infty}B^T)=\Big\{\pm i\frac{\lambda}{2\delta}\Big\},$$
imply that 
$$\textrm{cotan } \theta_2=\|I_2+2Q_{\infty}B^T\|=r(I_2+2Q_{\infty}B^T)=\frac{|\lambda|}{2\delta}.$$

In the present work, we consider specifically the $L_{\mu}^2$-case, and we aim at establishing a necessary and sufficient criterion ensuring when the spectral angle of the Ornstein-Uhlenbeck operator $P$ on $L^2_{\mu}$ is actually equal to the complementary of the angle of the analyticity sector $\theta_2$ of its associated semigroup, i.e., 
$$\theta_2 +\theta_{\textrm{spec}}= \frac{\pi}{2}.$$
We explain in this work that some pseudospectral phenomena account for the fact that the spectral angle of the operator $P$, and the angle of the analyticity sector of the associated semigroup may fail to be complementary
$$0<\theta_2 +\theta_{\textrm{spec}}< \frac{\pi}{2}.$$ 
As seen below, the Ornstein-Uhlenbeck operator
$$P=\frac{1}{2}\textrm{Tr}(Q\nabla_x^2)+\langle Bx,\nabla_x\rangle, \quad x \in \rr^n,$$ 
is selfadjoint on $L^2_{\mu}$ if and only if the condition 
$$\frac{1}{2}Q_{\infty}^{-1}Q+B^T=0,$$
holds. When this condition does not hold, the operator is non-selfadjoint. In this case, it is well-known that the resolvent of such an operator may take very large values in norm far away from its spectrum and that the spectrum may be very unstable under small perturbations. These two features are highly related. Indeed, studying the level lines of the norm of the resolvent of an operator provides substantial information about its spectral stability. We recall from~\cite{roch} that when $\mathcal{A}$ is a closed unbounded linear operator with a dense domain on a complex Hilbert space $H$, its $\varepsilon$-pseudospectrum
$$\sigma_{\varepsilon}(\mathcal{A})=\Big\{z \in \cc:  \ \|(\mathcal{A}-z)^{-1}\| \geq \frac{1}{\varepsilon} \Big\}, \quad \eps>0,$$
with the convention that $\|(\mathcal{A}-z)^{-1}\|=+\infty$ for any point $z \in \sigma(\mathcal{A})$, can be defined in an equivalent way in term of the spectrum of perturbations of the operator
$$\sigma_{\eps}(\mathcal{A})=\bigcup_{\mathcal{B} \in \mathcal{L}(H), \ \|\mathcal{B}\|_{\mathcal{L}(H)} \leq \eps}{\sigma(\mathcal{A}+\mathcal{B})},$$
where $\mathcal{L}(H)$ stands for the set of bounded linear operators on $H$. There exists an extensive literature on the notion of pseudospectrum. We refer the reader to~\cite{trefethen,trefethen2} and the references therein for a detailed account of this topic.

The study of pseudospectrum is non-trivial only for non-selfadjoint operators, or more precisely for non-normal operators. Indeed, the classical formula 
\begin{equation}\label{1}
\forall z \not\in \sigma(\mathcal{A}), \quad \|(\mathcal{A}-z)^{-1}\| = \frac{1}{\textrm{dist}(z,\sigma(\mathcal{A}))}, 
\end{equation}
implies that the resolvent of a normal operator 
cannot blow up far away from its spectrum~\cite{kato} (Chap.~V, Sect.~3.8, formula (3.31)). It ensures the stability of the spectrum under small perturbations 
$$\sigma_{\eps}(\mathcal{A}) = \{z \in \cc :\  \textrm{dist}(z,\sigma(\mathcal{A})) \leq \eps\}.$$
However, the formula (\ref{1}) does not hold anymore for non-normal operators and the resolvent of such operators may become very large in norm far away from the spectrum. It implies that the spectra of these operators may become very unstable under small perturbations. 
We explain in the following how these possible phenomena of blow-up in norm far from the spectrum for the resolvent of non-normal Ornstein-Uhlenbeck operators account for the fact that the spectral angle of the operator and the angle of analyticity sector of the associated semigroup may fail to be complementary
$$0<\theta_2+\theta_{\textrm{spec}} < \frac{\pi}{2}.$$

\subsection{Statement of the main result and some examples}\label{main}
The following statement is the main result contained in this work:

\bigskip

\begin{theorem}\label{th}
Let $P$ be the Ornstein-Ulhenbeck operator defined in \emph{(\ref{jen0})}, where $Q$ and $B$ are real $n \times n$-matrices, with $Q \gg 0$ symmetric positive definite and $B$ satisfying the spectral condition \emph{(\ref{pav1})}. Let $Q_{\infty}$ be the symmetric positive definite matrix defined in \emph{(\ref{pav3})}. Setting
$$A=I_n+2Q^{-1/2}Q_{\infty}B^TQ^{-1/2} \ \textrm{ and }\ D=Q^{1/2}Q_{\infty}^{-1}Q^{1/2},$$
the spectral angle of the operator $P$ in $L^2_{\mu}$ denoted $\theta_{\emph{\textrm{spec}}}$, and the angle of the analyticity sector of its associated semigroup $(T(t))_{t \geq 0}$ denoted $\theta_2$, are complementary, that is,
$$\theta_2 +\theta_{\emph{\textrm{spec}}}= \frac{\pi}{2},$$
if and only if the following finite intersection of kernels is non-zero
\begin{equation}\label{a21}
\Big( \bigcap_{k=0}^{n-1}\emph{\textrm{Ker}}
\big((A^2+\|A\|^2)D^{k}\big)\cap \emph{\textrm{Ker}}
\big([A,D^k]\big)\Big)\cap \rr^{n}\neq\{0\},
\end{equation}
where $\|\cdot\|$ denotes the spectral norm $\|\cdot\|_2$, and $[A,D^k]$ denotes the commutator of the two matrices $A$ and $D^k$.
\end{theorem}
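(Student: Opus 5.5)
The plan is to reduce everything to a finite-dimensional statement about the spectrum of $B$, using two results recalled above: the spectrum of $P$ in $L^2_\mu$ is described by (\ref{rt4}), so $\theta_{\textrm{spec}}$ is the spectral angle (\ref{dan3}) of $B$; and (\ref{rt2}) gives $\textrm{cotan }\theta_2=\|A\|=:a$. The condition $\theta_2+\theta_{\textrm{spec}}=\frac{\pi}{2}$ then reads
$$\sup_{\lambda\in\sigma(B)}\frac{|\textrm{Im }\lambda|}{|\textrm{Re }\lambda|}=a.$$
I will show that "$\leq$" always holds, and characterize when the supremum is attained (the supremum is a maximum since $\sigma(B)$ is finite).

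\emph{Key identity.} From the definition of $A$,
$$Q^{-1/2}Q_\infty B^TQ^{-1/2}=\tfrac12(A-I_n), \qquad\text{hence}\qquad Q^{1/2}B^TQ^{-1/2}=\tfrac12 D(A-I_n).$$
So $\sigma(B)=\sigma(B^T)=\sigma\big(\tfrac12D(A-I_n)\big)$, where $D$ is real symmetric positive definite and $A$ is real skew-symmetric by (\ref{ai4}).

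\emph{Bound on each eigenvalue.} Let $\lambda\in\sigma(B)$ with eigenvector $u\in\cc^n$. The eigenvalue equation can be written $(A-I_n)u=2\lambda D^{-1}u$. Taking the Hermitian product with $u$ gives
$$\langle Au,u\rangle-|u|^2=2\lambda\langle D^{-1}u,u\rangle.$$
Here $\langle Au,u\rangle\in i\rr$ and $\langle D^{-1}u,u\rangle>0$, so
$$\textrm{Re }\lambda=-\frac{|u|^2}{2\langle D^{-1}u,u\rangle},\qquad \frac{|\textrm{Im }\lambda|}{|\textrm{Re }\lambda|}=\frac{|\langle Au,u\rangle|}{|u|^2}\leq a.$$
This proves $\theta_2+\theta_{\textrm{spec}}\leq\frac{\pi}{2}$.

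\emph{Equality case.} Since $A$ is normal with spectrum in $i[-a,a]$, equality $|\langle Au,u\rangle|=a|u|^2$ holds if and only if $u$ lies in a single eigenspace $\textrm{Ker}(A\mp ia)$. In that case $(\pm ia-1)u=2\lambda D^{-1}u$, so $u$ is also an eigenvector of $D$. Conversely, a common eigenvector of $D$ and of $A$ for the eigenvalue $\pm ia$ is an eigenvector of $\frac12D(A-I_n)$, and the corresponding $\lambda$ realizes the ratio $a$. Hence complementarity holds if and only if $D$ and $A$ have a common complex eigenvector in $\textrm{Ker}(A-ia)\cup\textrm{Ker}(A+ia)$. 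The case $a=0$ is consistent with this: then $\theta_2=\frac{\pi}{2}$, $\theta_{\textrm{spec}}=0$, and (\ref{a21}) is trivially all of $\rr^n$.

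\emph{Translation into (\ref{a21}).} This is the step I expect to need the most care. Let $V$ denote the real subspace in (\ref{a21}).
\begin{itemize}
\item[($\Rightarrow$)] Take a common eigenvector $u=x+iy$ with $Du=\mu u$, $\mu>0$, and $Au=\pm iau$. Since $D$ and $A$ are real, $x$ and $y$ both lie in $\textrm{Ker}(D-\mu)\cap\textrm{Ker}(A^2+a^2)$. Moreover $Ax=\mp ay$, so $DAx=\mu Ax=ADx$. Choose $w\in\{x,y\}$ nonzero. Then $D^kw=\mu^kw$ gives $w\in V$.
\item[($\Leftarrow$)] First show $V$ is $D$-invariant. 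The conditions for $Dx$ at indices $k<n-1$ are exactly those for $x$ at index $k+1$. At index $k=n-1$, write $D^n$ as a polynomial in $I_n,\dots,D^{n-1}$ by Cayley--Hamilton. Since $D$ is symmetric, $D|_V$ has a real eigenvector $x\neq0$, $Dx=\mu x$. The conditions at $k=0,1$ give $(A^2+a^2)x=0$ and $ADx=DAx$. Set $u=x-\frac{i}{a}Ax$. Using $A^2x=-a^2x$ one gets $Au=iau$, and $u\neq0$ because its real part is $x$. Finally $Du=\mu x-\frac{i}{a}ADx=\mu u$.
\end{itemize}
This produces the required common eigenvector and closes the equivalence.
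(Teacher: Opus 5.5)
Your argument is correct, and it takes a genuinely different route from the paper's. The paper never looks at $\sigma(B)$ directly. It conjugates $P$ to the quadratic operator $\mathscr{L}$ and identifies the numerical range $\Sigma(q)$ as a sector of half-opening $\arctan\|A\|$. It then rotates the symbol into $q_{1,2}=(\|A\|\pm i)q$, whose real parts are non-negative but degenerate. Finally it combines Proposition~\ref{prop4} (eigenvalues on a half-line parallel to $i\rr$ exist if and only if $S_j\neq\{0\}$) with Lemma~\ref{lem1.1}, whose appendix proof is a long induction on $\textrm{Ker}\big(\textrm{Re }F_j(\textrm{Im }F_j)^k\big)$.

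Your identity $Q^{1/2}B^TQ^{-1/2}=\frac12D(A-I_n)$ short-circuits all of this. The Rayleigh-quotient bound is the finite-dimensional shadow of the inclusion $\sigma(\mathscr{L})\subset\Sigma(q)$. Your equality case is a common eigenvector of $D$ and $A$ in $\textrm{Ker}(A\mp i\|A\|)$, i.e.\ some $\mu\in\sigma(B)$ on a boundary ray of $-\Sigma(q)$. This is exactly what $S_j\neq\{0\}$ encodes, since $F_j$ has a real eigenvalue if and only if such a $\mu$ exists.

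Your route buys three things:
\begin{itemize}
\item brevity;
\item a direct matrix-level proof of $\theta_2+\theta_{\textrm{spec}}\leq\frac{\pi}{2}$;
\item a cleaner reading of (\ref{a21}): it amounts to the existence of a real eigenvector $x$ of $D$ with $(A^2+\|A\|^2)x=0$ and $[A,D]x=0$.
\end{itemize}
What it gives up is the explanatory content the paper is after. You import (\ref{rt2}) and (\ref{rt4}) as black boxes, including the sharpness of $\theta_2$ proved by Chill, Fa\v{s}angov\'a, Metafune and Pallara. The paper instead re-derives the analyticity sector from the numerical range (Proposition~\ref{propo2}). It also accounts for why $\theta_2$ cannot be enlarged: by eigenvalues along the boundary rays of $\Sigma(q)$ in the complementary case, and by superpolynomial resolvent growth inside $\Sigma(q)$ otherwise.

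Two harmless imprecisions remain.
\begin{itemize}
\item In ($\Leftarrow$), the commutator conditions for $Dx$ at index $k$ are not literally those for $x$ at index $k+1$, because $[A,D^k]D=[A,D^{k+1}]-D^k[A,D]$. You also need the index-one condition $[A,D]x=0$, which is available since $A\neq0$ forces $n\geq2$.
\item In ($\Rightarrow$), $[A,D^k]w=0$ uses that $Aw$, a multiple of the other vector among $x,y$, also lies in $\textrm{Ker}(D-\mu)$, not just that $D^kw=\mu^kw$.
\end{itemize}
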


\bigskip

The following examples highlight how Theorem~\ref{th} applies in various cases:

\bigskip

\noindent
\textit{Example 1. (Case of normal Ornstein-Uhlenbeck operators)}.
Let  $Q \gg 0$ be a symmetric positive definite real $n \times n$ matrix and $B \in M_n(\rr)$ satisfying the spectral condition (\ref{pav1}). As seen in Section~\ref{uhl} (see (\ref{ai5}) and (\ref{ai6})), the Ornstein-Uhlenbeck operator (\ref{jen0}) is normal when acting on $\sqrt{\rho}^{-1}\mathscr{S}(\rr^n) \subset L^2_{\mu}$, i.e.,
$$\forall u \in \mathscr{S}(\rr^n), \quad [P,P^*](\sqrt{\rho}^{-1}u)=0,$$ 
with $P^*$ being the $L^2_{\mu}$-adjoint of $P$;
if and only if the two matrices $Q_{\infty}^{-1}Q$ and $B^T$ commutes, i.e., 
$$[Q_{\infty}^{-1}Q,B^T]=0.$$ 
When this condition holds, we notice that the two matrices $A$ and $D$ commutes as well as
\begin{align*}
& \ [A,D]=2[Q^{-1/2}Q_{\infty}B^TQ^{-1/2},Q^{1/2}Q_{\infty}^{-1}Q^{1/2}]\\
=&\  2(Q^{-1/2}Q_{\infty}B^TQ_{\infty}^{-1}Q^{1/2}-Q^{1/2}B^TQ^{-1/2})
=2Q^{-1/2}Q_{\infty}(B^TQ_{\infty}^{-1}Q-Q_{\infty}^{-1}QB^T)Q^{-1/2}\\
= & \ -2Q^{-1/2}Q_{\infty}[Q_{\infty}^{-1}Q,B^T]Q^{-1/2}=0.
\end{align*}
Since the matrix $D=Q^{1/2}Q_{\infty}^{-1}Q^{1/2}$ is invertible, it implies in particular that the following intersections of kernels reduces to 
$$\Big( \bigcap_{k=0}^{n-1}\textrm{Ker}
\big((A^2+\|A\|^2)D^{k}\big)\cap \textrm{Ker}
\big([A,D^k]\big)\Big)\cap \rr^{n}=\textrm{Ker}\big(A^2+\|A\|^2\big)\cap \rr^{n}.$$
While recalling from (\ref{skew}) that the matrix $A$ is skew-adjoint, it follows that 
$$\|A^2\|=\|A^TA\|=\|A\|^2,$$ 
is the spectral radius as well as an eigenvalue of the non-negative symmetric matrix $A^TA=-A^2$, implying that the following condition holds 
$$\Big( \bigcap_{k=0}^{n-1}\textrm{Ker}
\big((A^2+\|A\|^2)D^{k}\big)\cap \textrm{Ker}
\big([A,D^k]\big)\Big)\cap \rr^{n}=\textrm{Ker}\big(A^2+\|A\|^2\big)\cap \rr^{n} \neq \{0\}.$$
According to Theorem~\ref{th}, the spectral angle of any normal\footnote{i.e., when $[Q_{\infty}^{-1}Q,B^T]=0$} Ornstein-Uhlenbeck operator in $L^2_{\mu}$, and the angle of the analyticity sector of its associated semigroup are always complementary.

\bigskip

\noindent
\textit{Example 2.}
By coming back to the previous two-dimensional study case given in~\cite{sector} (Example~1), that is, when $Q=I_2$ and 
$$B=\left(\begin{array}{cc}
-\delta& \lambda \\
0 & -\delta      
\end{array}\right),$$
with $\delta>0$, $\lambda \in \rr^*$, we notice from (\ref{dan2}) that the Ornstein-Uhlenbeck operator is non-normal as 
$$[Q_{\infty}^{-1},B^T]=\frac{4\delta \lambda^2}{4\delta^2+\lambda^2}\left(\begin{array}{cc}
-\delta& 0 \\
\lambda & \delta      
\end{array}\right)\neq 0.$$
Furthermore, another direct computation shows that the matrix
$$[A,D]=2[Q_{\infty}B^T,Q_{\infty}^{-1}]=2(Q_{\infty}B^T-B^TQ_{\infty})Q_{\infty}^{-1}=\frac{\lambda^2}{2\delta^3}\left(\begin{array}{cc}
\delta & 0 \\
-\lambda & -\delta      
\end{array}\right)Q_{\infty}^{-1},$$
is invertible. It implies that $\textrm{Ker}\big([A,D]\big)=\{0\}$, therefore the condition (\ref{a21}) does not hold
$$\Big( \bigcap_{k=0}^{1}\textrm{Ker}
\big((A^2+\|A\|^2)D^{k}\big)\cap \textrm{Ker}
\big([A,D^k]\big)\Big)\cap \rr^{2}=\{0\}.$$
Thus, we recover from Theorem~\ref{th} that the spectral angle of this Ornstein-Uhlenbeck operator in $L^2_{\mu}$ and the angle of the analyticity sector of its associated semigroup are not complementary.

\bigskip

\noindent
\textit{Example 3.}
Let us consider the case when $Q=I_3$ and 
$$B=\left(\begin{array}{ccc}
-1& 1 & 0 \\
0 & -1 & 1 \\
0 & 0 & -1     
\end{array}\right).$$
The drift matrix $B$ satisfies the spectral condition (\ref{pav1}) as $\sigma(B)=\{-1\}$.
In this case, direct computations provide that 
$$Q_{\infty}=\int_0^{+\infty}e^{-2s}
\left(\begin{array}{ccc}
1& s & \frac{s^2}{2} \\
0 & 1 & s \\
0 & 0 & 1     
\end{array}\right)\left(\begin{array}{ccc}
1& 0 & 0 \\
s & 1 & 0 \\
\frac{s^2}{2} & s & 1     
\end{array}\right)
ds=\frac{1}{16}\left(\begin{array}{ccc}
15& 7 & 2 \\
7 & 12 & 4 \\
2 & 4 & 8     
\end{array}\right).$$
The Ornstein-Uhlenbeck operator is non-normal as 
$$[Q_{\infty}^{-1},B^T]=Q_{\infty}^{-1}[B^T,Q_{\infty}]Q_{\infty}^{-1}\neq 0,$$
as
$$[B^T,Q_{\infty}]=\frac{1}{16}\left(\begin{array}{ccc}
-7& -2 & 0 \\
3 & 3 & 2 \\
3 & 4 & 4     
\end{array}\right).$$
Furthermore, another direct computation provides
$$[A,D]=2[Q_{\infty}B^T,Q_{\infty}^{-1}]=-2[B^T,Q_{\infty}]Q_{\infty}^{-1}=
-\frac{1}{8}\left(\begin{array}{ccc}
-7& -2 & 0 \\
3 & 3 & 2 \\
3 & 4 & 4     
\end{array}\right)Q_{\infty}^{-1}.$$
As $\det([A,D])=\frac{1}{32}\det(Q_{\infty}^{-1})\neq 0$, the matrix $[A,D]$ is invertible and its kernel is zero $\textrm{Ker}\big([A,D]\big)=\{0\}$. The condition (\ref{a21})
therefore does not hold
$$\Big( \bigcap_{k=0}^{2}\textrm{Ker}
\big((A^2+\|A\|^2)D^{k}\big)\cap \textrm{Ker}
\big([A,D^k]\big)\Big)\cap \rr^{3}=\{0\}.$$
Indeed, we deduce from (\ref{rt4}) and (\ref{dan3}) that the spectral angle of this Ornstein-Uhlenbeck operator is zero $\theta_{\textrm{spec}}=0$ as
$$\sigma(B)=\{-1\} \subset \rr_-.$$
On the other hand, the matrix
$$I_3+2Q_{\infty}B^T=\frac{1}{8}\left(\begin{array}{ccc}
0& -5 & -2 \\
5 & 0 & -4 \\
2 & 4 & 0     
\end{array}\right),$$
is non-zero. We deduce from (\ref{rt2}) that the angle $0<\theta_2 \leq \frac{\pi}{2}$ cannot take the value $\frac{\pi}{2}$ as
$$\textrm{cotan }\theta_2=\|I_3+2Q_{\infty}B^T\|>0.$$ 
The spectral angle of this Ornstein-Uhlenbeck operator in $L^2_{\mu}$ and the angle of the analyticity sector of its associated semigroup are indeed not complementary
$$0<\theta_2 +\theta_{\textrm{spec}}=\theta_2< \frac{\pi}{2}.$$

\bigskip

As seen above, the spectral angle of a Ornstein-Uhlenbeck operator $P$ and the angle of the analyticity sector of its associated semigroup may fail to be complementary
$$0<\theta_2+\theta_{\textrm{spec}} < \frac{\pi}{2},$$
only when $P$ is non-normal when acting on $\sqrt{\rho}^{-1}\mathscr{S}(\rr^n)$, but the condition (\ref{a21}) is not equivalent to the non-normality property $[Q_{\infty}^{-1}Q,B^T]\neq 0$, and condition (\ref{a21})
can actually hold for some non-normal Ornstein-Ulhenbeck operators.

Let us now explain how the pseudospectral properties of non-normal Ornstein-Ulhenbeck operators account for the complementary angle property to hold, or not. Building on general results established in~\cite{duke} for elliptic quadratic operators, the study of resolvent estimates for non-normal Ornstein-Ulhenbeck operators was investigated in the work~\cite{OPPS2}. If $Q \gg 0$ is a symmetric positive definite matrix and $B \in M_n(\rr)$ satisfies the spectral condition (\ref{pav1}) together with $[Q_{\infty}^{-1}Q,B^T]\neq 0$, the result of Theorem~2.4 in~\cite{OPPS2} shows that the resolvent of the Ornstein-Uhlenbeck operator $P$ exhibits rapid growth in norm along all half-lines starting from $-\frac{1}{2}\textrm{Tr}(B)$ and contained in the particular open angular sector\footnote{The notation used for this angular sector will make sense afterwards as it will come up as the interior of the numerical range $\Sigma(q)$ of a particular quadratic symbol $q$.} 
$$-\frac{1}{2}\textrm{Tr}(B)-\stackrel{\circ}{\Sigma(q)},$$ 
with
$$ \stackrel{\circ}{\Sigma(q)}=\{z=t(1+i\lambda) \in \cc^* : \  m_- < \lambda < m_+, \ t>0\},$$
that is
$$\forall m_- < \lambda < m_+, \forall N \in \mathbb{N}, \quad \lim_{t \to +\infty}\frac{\big\|\big(P+\frac{1}{2}\textrm{Tr}(B)+t(1+i\lambda)\big)^{-1}\big\|_{\mathcal{L}(L^2_{\mu})}}{t^N}=+\infty,$$
where $\|\cdot\|_{\mathcal{L}(L_{\mu}^2)}$ denotes the operator norm in the space of bounded operators on $L^2_{\mu}=L^2(\rr^n,d\mu)$, with
$$m_-=\inf_{(x,\xi) \in \mathcal{S}}\Big\langle\Big(-\frac{1}{2}QQ_{\infty}^{-1}-B\Big)x,\xi\Big\rangle, \quad m_+=\sup_{(x,\xi) \in \mathcal{S}}\Big\langle\Big(-\frac{1}{2}QQ_{\infty}^{-1}-B\Big)x,\xi\Big\rangle,$$
and
$$\mathcal{S}=\Big\{(x,\xi) \in \rr^{2n} :\ \frac{1}{2}|Q^{\frac{1}{2}}\xi|^2+\frac{1}{8}|Q^{\frac{1}{2}}Q_{\infty}^{-1}x|^2=1\Big\}.$$
\begin{figure}[ht]\label{fig2s}
\caption{The resolvent $\|(P-z)^{-1}\|_{\mathcal{L}(L^2_{\mu})}$ blows-up along all the half-lines contained in the dark region starting from $-\frac{1}{2}\textrm{Tr}(B)$. The spectrum is represented with black dots.}
\centerline{\includegraphics[scale=0.65]{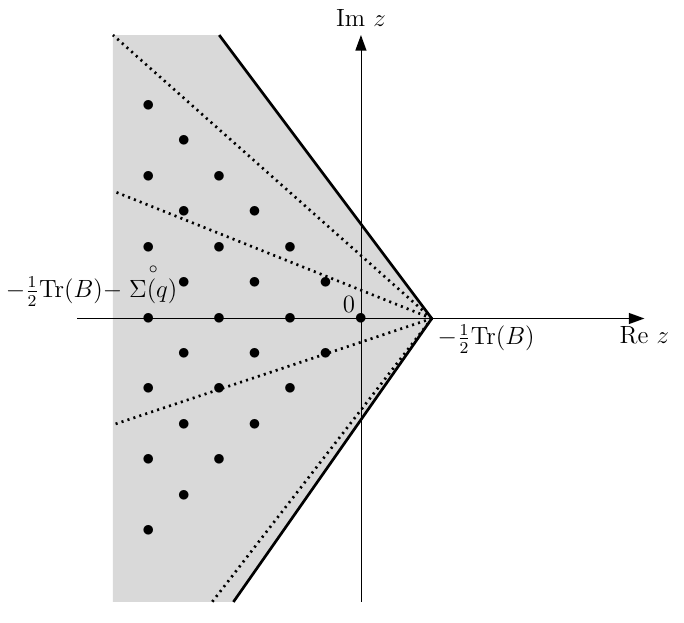}}
\end{figure}
\hspace{-3mm}
According to the characterization (\ref{dani1}), these phenomena of blow-up in norm along all the half-lines contained in the dark region starting from $-\frac{1}{2}\textrm{Tr}(B)$ appearing in Figure~\ref{fig2s} limit the angle of the analyticity sector of the associated semigroup. On the other hand, this dark region contains the spectrum of the operator $P$. However, the spectrum of the operator can spread out more or less deeply into this dark angular sector. As we shall see in the following, the condition (\ref{a21}) in Theorem~\ref{th} comes up as a necessary and sufficient condition for the spectral angle of the operator to be equal to half the opening of the dark angular sector\footnote{It will be shown that this dark angular sector is symmetric with respect to the real axis, i.e. $m_-=-m_+$.}. This condition will prevent the resolvent of the operator to blow-up far from its spectrum. It will ensure that the complementary angle property holds.

We close this introduction by describing the organization of the paper. The next section (Section~\ref{quad}) is devoted to the study of general elliptic accretive quadratic operators. We establish some results about the localization of their spectra. We also prove that they generate analytic semigroups and specify the angles of their analyticity sectors. Following and building on the analysis led in~\cite{OPPS2}, these results are then instrumental for the study of Ornstein-Uhlenbeck operators and allow to prove Theorem~\ref{th} in Section~\ref{uhl}. The last section is an appendix devoted to provide the proof of a technical lemma.

\bigskip

\noindent
\textbf{AI tools disclosure.} The author certifies that no AI assistance of any kind was used in the whole process for the elaboration, nor the writing of the present work. The refusal of any IA assistance is motivated by environmental concerns as well as personal aesthetic of what should be mathematics inspired in many ways  by the following title of the book by Dieudonn\'e: ``\textit{Pour l'honneur de l'esprit humain. Les math\'ematiques d'aujourd'hui}''.  

\bigskip

\noindent
{\bf Acknowledgements.}
The figures in the article have been created using the software Fig4Tex developed by Lafranche and Martin~\cite{lafranche}.

\section{Elliptic quadratic operators}\label{quad}

This section is devoted to recall few facts and establish some properties of general quadratic operators that will be instrumental to the present study of Ornstein-Uhlenbeck operators.

\subsection{Quadratic operators and notion of singular space}\label{secqw1}
Quadratic operators are pseudodifferential operators
$$q^w(x,D_x)u(x) =\frac{1}{(2\pi)^n}\iint_{\R^{2n}}{e^{i(x-y) \cdot \xi}q\Big(\frac{x+y}{2},\xi\Big)u(y)dyd\xi},$$
defined by the Weyl quantization of complex-valued quadratic symbols 
\begin{align*}
q: \rr^{n} \times \rr^n &\rightarrow \cc, \\
(x,\xi) &\mapsto q(x,\xi),
\end{align*}
with $n\geq 1$.
These non-selfadjoint operators are in fact only differential operators since the Weyl quantization of the quadratic symbol
$x^{\alpha} \xi^{\beta}$, with $(\alpha,\beta) \in \N^{2n}$, $|\alpha+\beta|=2$, is simply given by
\begin{equation}\label{forsym}
(x^{\alpha} \xi^{\beta})^w=\textrm{Op}^w(x^{\alpha} \xi^{\beta})=\frac{x^{\alpha}D_x^{\beta}+D_x^{\beta} x^{\alpha}}{2}, 
\end{equation}
with $D_x=i^{-1}\partial_x$. The maximal closed realization of a quadratic operator $q^w(x,D_x)$ on $L^2(\rr^n)$, that is, the operator equipped with the domain
\begin{equation}\label{dom1}
D(q)=\big\{u \in L^2(\rr^n) : \ q^w(x,D_x)u \in L^2(\rr^n)\big\},
\end{equation}
where $q^w(x,D_x)u$ is defined in the distribution sense, is known to coincide with the graph closure of its restriction to the Schwartz space~\cite{mehler} (pp.~425-426),
\begin{equation}\label{gra}
q^w(x,D_x) : \mathscr{S}(\rr^n) \rightarrow \mathscr{S}(\rr^n).
\end{equation}
Associated to a quadratic symbol $q : \rr_x^n \times \rr_{\xi}^n \rightarrow \mathbb{C}$ is its numerical range $\Sigma(q)$ defined as the closure in the
complex plane of all its values
$$\Sigma(q)=\overline{q(\rr_x^n \times \rr_{\xi}^n)} \subset \cc.$$
Another classical object associated to a quadratic symbol is the notion of Hamilton map, also known as the fundamental matrix, which is defined as the unique matrix satisfying the identity
$$\forall  (x,\xi) \in \R^{2n},\forall (y,\eta) \in \R^{2n}, \quad q\big((x,\xi),(y,\eta)\big)=\sigma\big((x,\xi),F(y,\eta)\big),$$
with $q(\cdot,\cdot)$ the polarized form associated to the quadratic form $q$, where $\sigma$ stands for the standard symplectic form
$$\sigma\big((x,\xi),(y,\eta)\big)=\langle \xi, y \rangle -\langle x, \eta\rangle=\sum_{j=1}^n(\xi_j y_j-x_j \eta_j),$$
with $x=(x_1,...,x_n)$, $y=(y_1,....,y_n)$, $\xi=(\xi_1,...,\xi_n)$, $\eta=(\eta_1,...,\eta_n) \in \cc^n$. A Hamilton map is always skew-symmetric with respect to the symplectic form. This is a consequence of the
properties of skew-symmetry of the symplectic form, and symmetry of the polarized form
$$\forall X,Y \in \rr^{2n}, \quad \sigma(X,FY)=q(X;Y)=q(Y;X)=\sigma(Y,FX)=-\sigma(FX,Y).$$
According to the definition, the Hamilton map expresses as 
$$F=\frac{1}{2}\left(\begin{array}{cc}
\nabla_{\xi}\nabla_x q & \nabla_{\xi}^2q  \\
-\nabla_x^2q & -\nabla_{x}\nabla_{\xi} q 
\end{array} \right),$$
where the matrices $\nabla_x^2q=(a_{i,j})_{1 \leq i,j \leq n}$,  $\nabla_{\xi}^2q=(b_{i,j})_{1 \leq i,j \leq n}$, $\nabla_{\xi}\nabla_x q =(c_{i,j})_{1 \leq i,j \leq n}$,
$\nabla_{x}\nabla_{\xi} q=(d_{i,j})_{1 \leq i,j \leq n}$ are given by
$$a_{i,j}=\partial_{x_i,x_j}^2 q, \quad b_{i,j}=\partial_{\xi_i,\xi_j}^2q, \quad c_{i,j}=\partial_{\xi_i,x_j}^2q, \quad d_{i,j}=\partial_{x_i,\xi_j}^2q.$$
It also follows from the definition that the real and imaginary parts of the Hamilton map~$F$,
$$\textrm{Re } F=\frac{1}{2}(F+\overline{F}), \quad \textrm{Im }F=\frac{1}{2i}(F-\overline{F}),$$ 
are the Hamilton maps associated respectively to the real and imaginary parts of the quadratic symbol $q$.

Since the classical works by Sj\"ostrand~\cite{sjostrand} and Boutet de Monvel~\cite{boutet3}, a complete description for the spectrum of elliptic quadratic operators is known and has played an important role in the analysis of partial differential operators with double characteristics. 
Elliptic quadratic operators are quadratic operators whose Weyl symbols satisfy to the ellipticity condition
\begin{equation}\label{yo1}
(x,\xi) \in \rr^{2n}, \ q(x,\xi)=0 \Rightarrow (x,\xi)=0.
\end{equation} 
When this ellipticity condition holds, the numerical range $\Sigma(q)$ has a specific shape \cite{sjostrand}. In dimension $n \geq 2$, one may always find a non-zero complex number $z \in \cc^*$ such that $\textrm{Re}(z q)$ is a positive definite quadratic form; whereas in dimension $n=1$, the same result holds true if in addition we assume that $\Sigma(q) \neq \cc$. The numerical range of an elliptic quadratic form can therefore take only two shapes: $\cc$, or a closed angular sector with a vertex in $0$ and an angle strictly less than $\pi$.
When the ellipticity condition (\ref{yo1}) holds, the domain of the quadratic operator $D(q)$ is equal to the Hilbert space
\begin{equation}\label{14.1}
B=\{ u \in L^2(\rr^n) : \ x^{\alpha} D_x^{\beta} u \in L^2(\rr^n), \ |\alpha+\beta| \leq 2\},
\end{equation}
equipped with the norm
$$\|u\|_B^2=\sum_{|\alpha+\beta| \leq 2}{\|x^{\alpha} D_x^{\beta} u\|_{L^2(\rr^n)}^2}.$$
The exact description of the spectrum of an elliptic quadratic operator
$$q^w(x,D_x) : B \rightarrow L^2(\rr^n),$$
whose numerical range satisfies $\Sigma(q) \neq \cc$, was given independently by Sjöstrand and Boutet de Monvel in the 1970s~\cite{boutet3,sjostrand}. This spectrum is only composed of eigenvalues
\begin{equation}\label{15}
\sigma(q^w(x,D_x))=\Big\{ \sum_{\substack{\lambda \in \sigma(F), \\  -i \lambda \in \Sigma(q)\setminus \{0\}} }
{(r_{\lambda}+2 k_{\lambda})(-i\lambda) :\ k_{\lambda} \in \mathbb{N}}
\Big\} \subset \Sigma(q), 
\end{equation}
where $F$ is the Hamilton map of the quadratic form $q$ and $r_{\lambda}$ is the dimension of the space of generalized eigenvectors of $F$ in $\cc^{2n}$ associated to the eigenvalue $\lambda \in \cc$.

In~\cite{HPS}, the notion of singular space was introduced by Hitrik and the author by pointing out the existence of a particular vector subspace in the phase space, which is intrinsically associated to a quadratic symbol $q$, and defined as the following finite intersection of kernels
\begin{equation}\label{singular}
S=\Big( \bigcap_{j=0}^{2n-1}\textrm{Ker}
\big[\textrm{Re }F(\textrm{Im }F)^j \big]\Big)\cap \rr^{2n},
\end{equation}
where $\textrm{Re }F$ and $\textrm{Im }F$ stand respectively for the real and imaginary parts of the Hamilton map $F$ associated to the quadratic symbol $q$.
As pointed out in \cite{HPS,OPPS,karel,viola1}, the singular space plays a basic role in understanding the spectral and hypoelliptic properties of non-elliptic quadratic operators, as well as the spectral and pseudospectral properties of certain classes of degenerate doubly characteristic pseudodifferential operators~\cite{kps3,kps4,viola}. Some applications of these results to the study of degenerate hypoelliptic Ornstein-Uhlenbeck operators and degenerate hypoelliptic Fokker-Planck operators are also given in~\cite{OPPS2}, and play a key role in the present work.

More specifically, the class of quadratic operators $q^w(x,D_x)$ whose symbols have non-negative real parts $\textrm{Re }q \geq 0$, and satisfy to a condition of partial ellipticity on the phase space, namely a condition of ellipticity on their singular space 
\begin{equation}\label{sm5}
(x,\xi) \in S, \ q(x,\xi)=0 \Rightarrow (x,\xi)=0,
\end{equation}
is studied in~\cite{HPS}. 
This condition of partial ellipticity is obviously weaker than the ellipticity condition (\ref{yo1}) since $S \subset \rr^{2n}$, and this strictly as soon as $S \neq \rr^{2n}$, that is, when the real part of the symbol is not identically equal to zero. The result of \cite{HPS} (Theorem~1.2.2) shows that the description of the spectrum known in the elliptic case (\ref{15}) extends to non-elliptic quadratic operators whose symbols have a non-negative real part and enjoy ellipticity on their singular spaces as follows.
Let $q : \rr_x^n \times \rr_{\xi}^n \rightarrow \cc$, $n \geq 1$, be a complex-valued quadratic form with a non-negative real part $\textrm{Re }q \geq 0$. When the quadratic symbol $q$ is elliptic on its singular space
$$(x,\xi) \in S, \ q(x,\xi)=0 \Rightarrow (x,\xi)=0,$$ 
the spectrum of the quadratic operator  $q^w(x,D_x)$ is only composed by eigenvalues
\begin{equation}\label{sm6}
\sigma(q^w(x,D_x))=\Big\{ \sum_{\substack{\lambda \in \sigma(F), \\  -i \lambda \in \cc_+
\cup (\Sigma(q|_S) \setminus \{0\})} }
{\big{(}r_{\lambda}+2 k_{\lambda}
\big{)}(-i\lambda) :\ k_{\lambda} \in \mathbb{N}}
\Big\}, 
\end{equation}
where $F$ is the Hamilton map of the quadratic form $q$, $r_{\lambda}$ is the dimension of the space of generalized eigenvectors of $F$ in $\cc^{2n}$ associated to the eigenvalue $\lambda \in \cc$,
$$\Sigma(q|_S)=\overline{q(S)}, \quad  \cc_+=\{z \in \cc : \textrm{Re }z>0\}.$$
The proof of this result takes advantage of a geometric feature uncovered in~\cite{HPS}, which is that the singular space $S$ associated to a complex-valued quadratic form $q$ has always a symplectic structure\footnote{In the sense that the restriction of the symplectic form $\sigma|_{S}$ to the singular space is non-degenerate.} when the symbol $q$ is elliptic on $S$. This feature allows to decompose the phase space into the direct sum of the two symplectically orthogonal spaces
$$\rr^{2n}=S\oplus^{\sigma \perp} S^{\sigma \perp}, \quad \textrm{dim}_{\rr}S^{\sigma \perp}=2n', \quad  \textrm{dim}_{\rr}S=2n'', \quad n=n'+n'',$$
where $S^{\sigma \perp}$ is the orthogonal complement of the singular space in $\rr^{2n}$ with respect to the symplectic form 
$$S^{\sigma \perp}=\{X \in \rr^{2n} : \ \forall Y \in S, \ \sigma(X,Y)=0\}.$$
By using some stability properties induced directly by the definition of the singular space together with the Cayley-Hamilton Theorem
$$(\textrm{Re }F) S \subset S, \quad (\textrm{Im }F)S \subset S, \quad (\textrm{Re }F) S^{\sigma \perp} \subset S^{\sigma \perp}, \quad (\textrm{Im }F)S^{\sigma \perp} \subset S^{\sigma \perp},$$
as 
$$(\textrm{Re }F) S=\{0\} \quad \textrm{ and } \quad \forall j \geq 0, \quad (\textrm{Im }F)^j \in \textrm{Vect}_{\rr}(I,\textrm{Im }F,...,(\textrm{Im }F)^{2n-1}),$$ 
the variables in $S$ and $S^{\sigma \perp}$ can be tensorized, and some symplectic coordinates onto the symplectic spaces $S$ and $S^{\sigma \perp}$, 
$$X=X'+X'', \quad X=(x,\xi) \in \rr^{2n}, \ X'=(x',\xi') \in S^{\sigma \perp}, \ X''=(x'',\xi'') \in S,$$
can be chosen so that
$$q(X)=q|_{S^{\sigma \perp}}(X')+q|_S(X''),$$
where the second quadratic form $q|_S$ is purely imaginary-valued, i.e., equal to
$$q|_S=i \tilde{q}|_S,$$
with $\tilde{q}|_S$ a real-valued quadratic form, whereas the first quadratic form $q|_{S^{\sigma \perp}}$ is a complex-valued quadratic form with a non-negative real part $\textrm{Re }q|_{S^{\sigma \perp}} \geq 0$.
The assumption of ellipticity on the singular space imply that $\tilde{q}|_S$ is an elliptic real-valued quadratic form, i.e., that there exists $\eps_0 \in \{\pm 1\}$ such that $\eps_0 \tilde{q}|_S$ is a positive definite quadratic form. Up to a new choice of symplectic coordinates, any positive definite quadratic form can be reduced to a harmonic oscillator thanks to~\cite{hormander} (Lemma~18.6.4). In such coordinates, the restriction of the quadratic form $q|_S$ on the singular space reduces to the normal form (see Proposition~2.0.1 in~\cite{HPS}),
\begin{equation}\label{evg40}
q|_S(X'')=i\eps_0 \sum_{j=1}^{n''}{\lambda_j(\xi_j''^2+x_j''^2)}, \quad X''=(x'',\xi'') \in S,
\end{equation}
with $\lambda_j>0$. The formula (\ref{evg40}) implies in particular that the set $\Sigma(q|_S)  \setminus \{0\}$ appearing in the formula (\ref{sm6}) satisfies
$$\Sigma(q|_S)  \setminus \{0\}=\overline{q(S)} \setminus \{0\}\subset i\eps_0\rr_+^*.$$
Regarding the second quadratic form $q|_{S^{\sigma \perp}}$, its real part fails in general to be positive definite unless the real part of $q$ is. However, this quadratic form $q|_{S^{\sigma \perp}}$ enjoys specific dynamical features. Indeed, as proved in~\cite{HPS} (Proposition~2.0.1), the definition of the singular space implies the following properties:
\begin{itemize}
\item[$(i)$] The average of the real part of
$q|_{S^{\sigma \perp}}$ by the flow generated by the Hamilton vector field associated to its imaginary part
$H_{\textrm{Im}q|_{S^{\sigma \perp}}}$,
$$\langle \textrm{Re }q|_{S^{\sigma \perp}}\rangle_T(X')=\frac{1}{2T}\int_{-T}^T{(\textrm{Re }q|_{S^{\sigma \perp}})(e^{tH_{\textrm{Im}q|_{S^{\sigma \perp}}}}X')dt} \gg 0, \quad T>0,$$
is a positive definite quadratic form on the symplectic space $S^{\sigma \perp}$;
\item[$(ii)$] The sum of the non-negative quadratic forms
$$\sum_{j=0}^{2n'-1}{(\textrm{Re } q|_{S^{\sigma \perp}})((\textrm{Im } F|_{S^{\sigma \perp}})^j X')} \gg 0,$$
is positive definite on the symplectic space $S^{\sigma \perp}$.
\end{itemize}
Following~\cite{HPS} (p. 807), the singular space can also be described in terms of the eigenspaces of $F$
associated to its real eigenvalues. Indeed, when $\textrm{Re }q \geq 0$ and $q$ is elliptic on it singular space, the set of real eigenvalues of the Hamilton map $F$ writes as
$$\sigma(F) \cap \rr =\{\mu_1,...,\mu_r,-\mu_1,...,-\mu_r\},$$
with $\mu_j > 0$, $\mu_j \neq \mu_k$  if $j \neq k$, and $\{\mu_1,...,\mu_r\}=\{\lambda_1,...,\lambda_{n''}\}$, where $\lambda_j>0$ are the positive parameters appearing in the formula (\ref{evg40}).
The singular space is then the direct sum of the symplectically orthogonal spaces
$$S=S_{\mu_1} \oplus^{\sigma \perp} S_{\mu_2} \oplus^{\sigma \perp}... \oplus^{\sigma \perp} S_{\mu_r},$$
where $S_{\mu_{j}}$ is the symplectic space
$$S_{\mu_j}=\big(\textrm{Ker}(F -\mu_j I) \oplus \textrm{Ker}(F+\mu_j I) \big) \cap \rr^{2n}.$$
It follows that the set of real eigenvalues of  $F$ is non-empty
$$\sigma(F) \cap \rr \neq \emptyset,$$
if and only if the singular space $S$ is non-zero, i.e. $S \neq \{0\}$. We can therefore deduce from (\ref{sm6}) the following the result:

\bigskip

\begin{proposition}\label{prop4}
Let $q : \rr^{n} \times \rr^n \rightarrow \cc$, with $n \geq 1$, be a complex-valued quadratic form with a non-negative real part $\emph{\textrm{Re }}q \geq 0$. If the quadratic symbol $q$ is elliptic on its singular space $S$, i.e. when the condition \emph{(\ref{sm5})} holds, the spectrum of quadratic operator $q^w(x,D_x)$ contains some eigenvalues regularly spaced out along an half-line parallel to the imaginary axis in the complex plane if and only if the singular space $S$ is non-zero, i.e. $S\neq \{0\}$.
\end{proposition}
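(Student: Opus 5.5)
Since Proposition~\ref{prop4} is announced as a consequence of the spectral formula (\ref{sm6}) and of the description of the singular space through the real eigenvalues of $F$, the plan is to prove the two implications directly from these facts.

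\emph{Sufficiency.} Assume $S\neq\{0\}$. By the description of $S$ recalled above, $\sigma(F)\cap\rr=\{\pm\mu_1,\dots,\pm\mu_r\}$ with $r\geq 1$, and $\{\mu_1,\dots,\mu_r\}=\{\lambda_1,\dots,\lambda_{n''}\}$, the positive parameters of the normal form (\ref{evg40}). Take $\lambda=-\eps_0\mu_1\in\sigma(F)$. Then $-i\lambda=i\eps_0\mu_1$ lies in $\Sigma(q|_S)\setminus\{0\}$, since by (\ref{evg40}) $\Sigma(q|_S)\setminus\{0\}=i\eps_0\rr_+^*$ as soon as $S\neq\{0\}$. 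Hence $\lambda$ is admissible in (\ref{sm6}). Now fix the contributions of all other admissible eigenvalues (for instance $k_{\lambda'}=0$ for $\lambda'\neq\lambda$) and let only $k_\lambda$ vary. This produces the eigenvalues
$$z_0+(r_\lambda+2k)\,i\eps_0\mu_1,\quad k\in\nn,$$
where $z_0$ is the fixed sum of the other contributions. These points are regularly spaced, with step $2\mu_1$, along the half-line $z_0+i\eps_0\rr_+$, which is parallel to the imaginary axis.

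\emph{Necessity.} Assume $S=\{0\}$. Then $\Sigma(q|_S)\setminus\{0\}=\emptyset$, so the only admissible $\lambda$ in (\ref{sm6}) are those with $-i\lambda\in\cc_+$. Since $\sigma(F)$ is finite, there is $c>0$ such that $\textrm{Re}(-i\lambda)\geq c$ for every admissible $\lambda$.
\begin{itemize}
\item Every eigenvalue $z=\sum_\lambda(r_\lambda+2k_\lambda)(-i\lambda)$ then satisfies $\textrm{Re }z\geq c\sum_\lambda(r_\lambda+2k_\lambda)$.
\item Moreover $|z|\leq C\sum_\lambda(r_\lambda+2k_\lambda)$, with $C=\max|\lambda|$ over admissible $\lambda$.
\item Combining the two bounds gives $\textrm{Re }z\geq (c/C)|z|$ for every $z\in\sigma(q^w(x,D_x))$.
\end{itemize}
So the spectrum lies in a closed sector $\{|\arg z|\leq\theta\}$ with $\theta<\pi/2$. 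Along a vertical half-line $\{a+it: t\geq0\}$ (or $t\leq0$), any point of that sector has $|t|\leq a\tan\theta$. Such a half-line therefore meets the spectrum in only finitely many points, and cannot carry infinitely many regularly spaced eigenvalues.

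The main care point is reading ``regularly spaced out along a half-line'' as an infinite arithmetic progression. In the sufficiency part, one must also check that $-\eps_0\mu_1$, rather than $+\eps_0\mu_1$, is the eigenvalue landing in $\Sigma(q|_S)$. This is automatic, because both $\pm\mu_1$ belong to $\sigma(F)$.
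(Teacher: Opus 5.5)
Your proof is correct and follows the paper's route. The paper only says that the proposition follows from the spectral formula (\ref{sm6}) together with the equivalence $\sigma(F)\cap\rr\neq\emptyset \Leftrightarrow S\neq\{0\}$, and you have supplied the omitted details: admissibility of $\lambda=-\eps_0\mu_1$ via (\ref{evg40}) for sufficiency, and, for necessity, the sector bound $\textrm{Re }z\geq (c/C)|z|$ that holds when only eigenvalues with $-i\lambda\in\cc_+$ contribute, which rules out an infinite vertical progression.
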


\begin{figure}[ht]
\caption{Spectrum $\sigma(q^w(x,D_x))$: Case when $S \neq \{0\}$ (Left); Case when $S=\{0\}$ (Right).}
\begin{minipage}[b]{0.3\linewidth}
\includegraphics[scale=0.34]{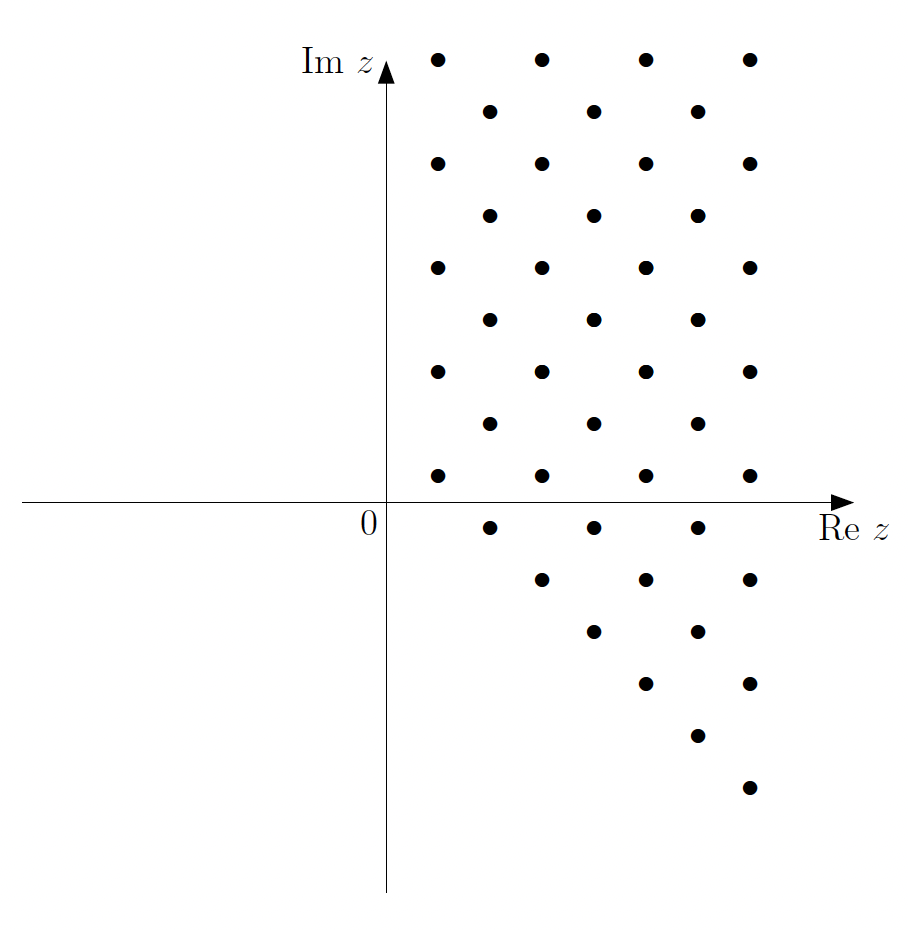}
\end{minipage}
\hspace{2cm}
\begin{minipage}[b]{0.3\linewidth}
\includegraphics[scale=0.2]{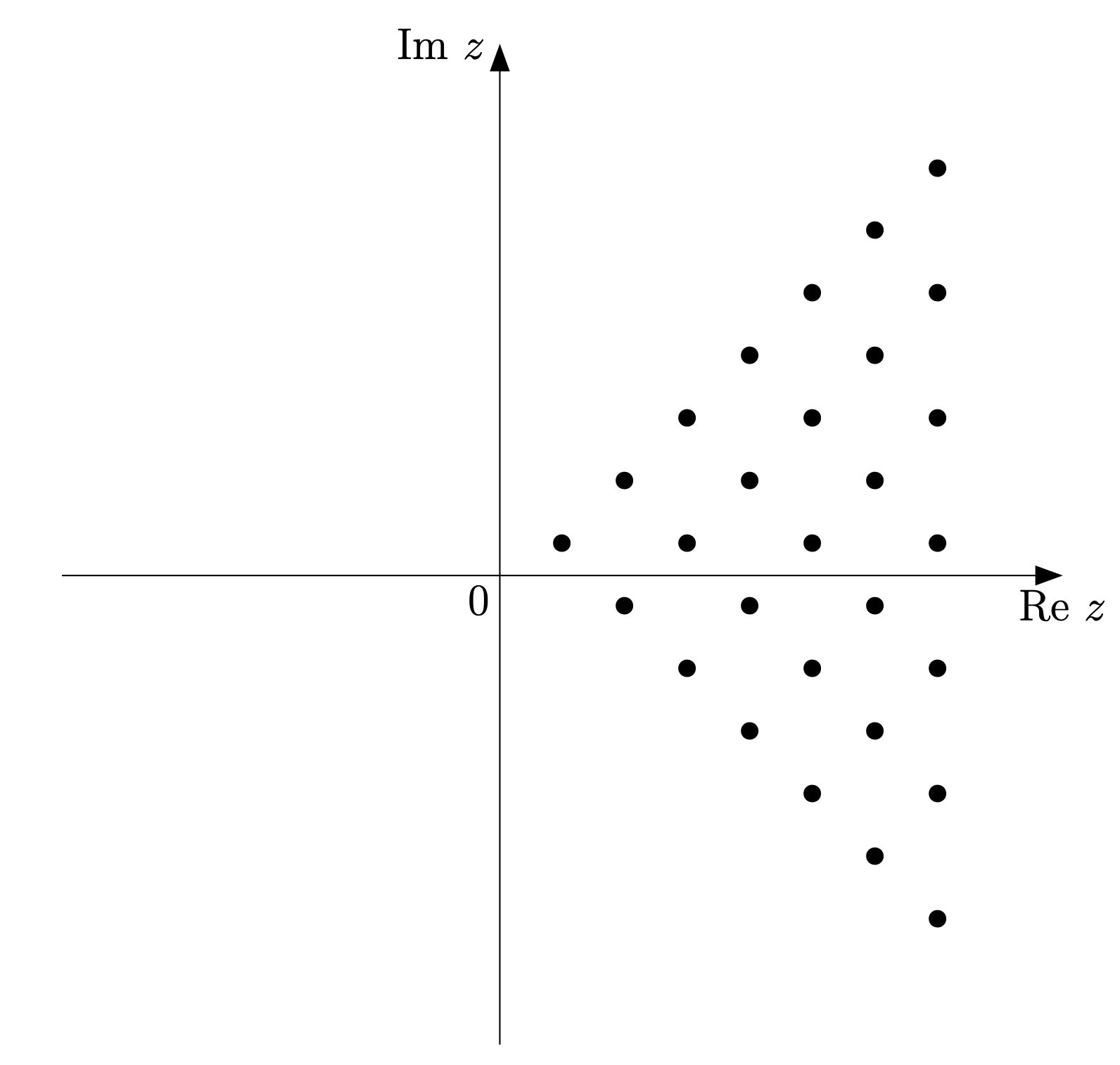}
\end{minipage}
\hspace{0.5cm}
\end{figure}

\bigskip

\subsection{Analytic semigroups generated by elliptic quadratic differential operators}

Let $q : \rr^{n}_x \times \rr_{\xi}^n \rightarrow \cc$, with $n \geq 1$, be a quadratic form with a non-negative real part $\textrm{Re }q \geq 0$. We know from~\cite{mehler} (pp. 425-426) that the quadratic operator $q^w(x,D_x)$ equipped with the domain (\ref{dom1}), is maximal accretive and generates a contraction semigroup $(e^{-tq^w})_{t \geq 0}$ on $L^2(\rr^n)$.

In the following, we consider the specific case when the real part $\textrm{Re }q \gg 0$ is a positive definite quadratic form, which is a special case of elliptic quadratic symbol
$$(x,\xi) \in \rr^{2n}, \ q(x,\xi)=0 \Rightarrow (x,\xi)=0.$$
According to the results recalled above (Subsection~\ref{secqw1}), the numerical range $\Sigma(q)$ is then a closed angular sector with a vertex in $0$ and an angle strictly less than $\pi$ included in $\cc_+ \cup \{0\}$,
$$\Sigma(q)=\overline{q(\rr_x^n \times \rr_{\xi}^n)} \subset \cc_+\cup \{0\}=\{z \in \cc : \textrm{Re }z>0\}\cup \{0\}.$$
It therefore exist some $-\frac{\pi}{2} < \theta_- \leq \theta_+ <\frac{\pi}{2}$ such that 
$$\partial\Sigma(q)=e^{i\theta_-}\rr_+ \cup e^{i\theta_+}\rr_+.$$
The next result shows that $(e^{-tq^w})_{t \geq 0}$ is an analytic semigroup on $L^2(\rr^n)$:

\bigskip

\begin{proposition}\label{propo2}
Let $q : \rr_x^{n} \times \rr_{\xi}^n \rightarrow \cc$, with $n \geq 1$, be a complex-valued quadratic form with a positive definite real part $\emph{\textrm{Re }}q \gg 0$. 
Then, the quadratic operator $q^w(x,D_x)$ generates a contraction semigroup $(e^{-tq^w})_{t \geq 0}$ on $L^2(\rr^n)$, which is analytic in the angular sector
$$\Sigma_{\delta}=\{z \in \cc^* : |\arg z| <\delta\} \cup \{0\},$$
with 
$$\partial\Sigma(q)=e^{i\theta_-}\rr_+ \cup e^{i\theta_+}\rr_+ ; \quad -\frac{\pi}{2} < \theta_- \leq \theta_+ <\frac{\pi}{2} ; \quad 0<\delta=\inf\Big(\frac{\pi}{2}-\theta_+, \theta_-+\frac{\pi}{2}\Big)\leq \frac{\pi}{2}.$$
\end{proposition}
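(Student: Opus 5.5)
The plan is to establish the two required properties of $e^{-tq^w}$ separately: contraction, then analyticity in $\Sigma_\delta$. For analyticity we use the characterization (\ref{dani1}) together with the numerical range of the symbol. Contraction is the known fact recalled above: since $\textrm{Re }q \geq 0$, the operator $q^w(x,D_x)$ is maximal accretive by~\cite{mehler}.

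For analyticity we rotate the symbol. For $|\varphi|<\delta$, set $q_\varphi=e^{i\varphi}q$. Its numerical range is
$$\Sigma(q_\varphi)=e^{i\varphi}\Sigma(q),$$
whose boundary half-lines have arguments $\theta_\pm+\varphi$. By the definition of $\delta$, these arguments lie strictly in $(-\frac{\pi}{2},\frac{\pi}{2})$. Since $\Sigma(q)$ is a closed sector with vertex $0$ and contained in $\cc_+\cup\{0\}$, compactness of its unit section gives
$$\textrm{Re }q_\varphi \geq c_\varphi |X|^2, \quad c_\varphi>0.$$
So $\textrm{Re }q_\varphi\gg 0$. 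The Weyl quantization is linear, hence $q_\varphi^w=e^{i\varphi}q^w$ on $\mathscr{S}(\rr^n)$, and both operators have the same domain $B$ by ellipticity, see (\ref{14.1}). Thus $e^{i\varphi}q^w(x,D_x)$ is maximal accretive.

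The key step is to deduce the resolvent bound. For maximal accretive $e^{i\varphi}q^w$ the numerical range of the operator, $\{\langle q^w u,u\rangle: \|u\|=1\}$, lies in $e^{-i\varphi}\overline{\cc_+}$. Intersecting over all $|\varphi|<\delta$, the numerical range lies in the closed sector $\overline{\Sigma}_{\frac{\pi}{2}-\delta}$. Maximal accretivity guarantees that the resolvent set contains the complement of this closed sector. The standard estimate
$$\|(q^w-z)^{-1}\|\leq \textrm{dist}\big(z,\overline{\Sigma}_{\frac{\pi}{2}-\delta}\big)^{-1}$$
then holds there, so for $|\arg(-z)|\leq \frac{\pi}{2}+\delta-\eps$ one gets $\|(q^w-z)^{-1}\|\leq M_\eps/|z|$. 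For the generator $-q^w$ this is exactly (\ref{dani1}) with $\omega=0$ and $\theta=\delta$. Alternatively, $T(z)=e^{-zq^w}$ with $z=te^{i\varphi}$ can be defined directly as the contraction semigroup generated by $e^{i\varphi}q^w$. This gives boundedness by $1$ in the sector, and analyticity follows from the resolvent bound via Kato~\cite{kato}.

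The main obstacle is justifying that the numerical range of the operator $q^w$ is contained in the closed convex hull of $\Sigma(q)$. This is precisely what accretivity of each rotated symbol $e^{i\varphi}q$ gives, since real-valued nonnegative quadratic forms quantize to nonnegative operators (for instance by a symplectic reduction to a sum of harmonic oscillators plus nonnegative terms, using~\cite{hormander}). We also need that the domain of $e^{i\varphi}q^w$ as a maximal operator is the same for all $\varphi$, which follows from (\ref{14.1}).
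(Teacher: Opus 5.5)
Your proposal is correct and follows essentially the paper's route. Accretivity of the rotated symbols $e^{i\varphi}q$, obtained via the symplectic/metaplectic reduction of their nonnegative real parts, confines the numerical range of $q^w(x,D_x)$ to a sector; this gives a resolvent bound by the inverse distance to that sector, and hence analyticity through (\ref{dani1}) with $\omega=0$, $\theta=\delta$. The differences are minor: you work with the symmetric sector $\overline{\Sigma}_{\frac{\pi}{2}-\delta}\supset\Sigma(q)$ instead of $\Sigma(q)$ itself, and you get that its complement lies in the resolvent set from maximal accretivity of $e^{i\varphi}q^w(x,D_x)$, whereas the paper uses the graph-closure property (\ref{gra}) together with the spectral description (\ref{15}).
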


\bigskip

\begin{proof}
Let $\tilde{q} : \rr_x^{n} \times \rr_{\xi}^n \rightarrow \cc$, with $n \geq 1$, be an arbitrary complex-valued quadratic form with a non-negative real part $\textrm{Re }\tilde{q} \geq 0$. We deduce from~\cite{hormander} (Theorem~21.5.3) that there exists a real linear symplectic transformation $\chi$ of $\rr^{2n}$ such that 
$$\textrm{Re}(\tilde{q} \circ \chi)(x,\xi)=\sum_{j=1}^{k}{\lambda_j(\xi_j^2+x_j^2)}
+\sum_{j=k+1}^{k+l}{x_j^2},$$
with $k,l \in \nn$ and $\lambda_j>0$ for all $j=1,...,k$. The symplectic invariance of the Weyl quantization~\cite{hormander} (Theorem~18.5.9) allows to find a metaplectic operator $U$, which is a unitary transformation on $L^2(\rr^n)$, and an automorphism of the spaces $\mathscr{S}(\rr^n)$ and $B$, where $B$ is the Hilbert space defined in (\ref{14.1}) such that  
$$(\textrm{Re } \tilde{q})^w(x,D_x)=U^{-1} \Big(\sum_{j=1}^{k}{\lambda_j(D_{x_j}^2+x_j^2)}
+\sum_{j=k+1}^{k+l}{x_j^2} \Big) U.$$
Thanks to the standard properties of the Weyl quantization 
$$\textrm{Re}(\tilde{q}^w(x,D_x))=\frac{1}{2}\big(\tilde{q}^w(x,D_x)+\tilde{q}^w(x,D_x)^*\big)=(\textrm{Re }\tilde{q})^w(x,D_x),$$ 
we obtain that for all $z \in \cc$ and $u \in \mathscr{S}(\rr^n)$,
\begin{multline*}
\textrm{Re}(\tilde{q}^w(x,D_x)u-z u,u)_{L^2}= (\textrm{Re}(\tilde{q}^w(x,D_x))u,u)_{L^2} -(\textrm{Re }z) \|u\|_{L^2}^2 \\ 
=((\textrm{Re }\tilde{q})^w(x,D_x)u,u)_{L^2} -(\textrm{Re }z) \|u\|_{L^2}^2.
\end{multline*} 
By using that $U$ is a unitary operator on $L^2(\rr^{n})$, we obtain that for all $z \in \cc$ and $u \in \mathscr{S}(\rr^n)$,
\begin{multline}\label{inf3.52}
\|\tilde{q}^w(x,D_x)u-z u\|_{L^2}\|u\|_{L^2} \geq \textrm{Re}(\tilde{q}^w(x,D_x)u-z u,u)_{L^2}\\
= \sum_{j=1}^{k}{\lambda_j (\|D_{x_j} U u\|_{L^2}^2 +\|x_j U u\|_{L^2}^2 )} +\sum_{j=k+1}^{k+l}{\|x_j U u\|_{L^2}^2} -(\textrm{Re }z) \|u\|_{L^2}^2
\geq  -(\textrm{Re }z) \|u\|_{L^2}^2. 
\end{multline} 
By applying the estimate (\ref{inf3.52}) to the three quadratic forms $q$ and $q_{\pm}=e^{\pm i(\frac{\pi}{2}-\theta_{\pm})}q$, whose real parts are non-negative, we derive that
\begin{equation}\label{inf3.53}
\forall z \in \cc, \forall u \in \mathscr{S}(\rr^n), \quad   d(z,\Sigma(q)) \|u\|_{L^2} \leq \|q^w(x,D_x)u-z u\|_{L^2}, 
\end{equation} 
where $d(z,\Sigma(q))$ is the distance of $z$ to the closed convex subset $\Sigma(q)$. According to (\ref{gra}) and (\ref{15}), we deduce from (\ref{inf3.53}) that 
$$\forall z \in \cc \setminus \Sigma(q), \quad   \|(q^w(x,D_x)-z)^{-1}\|_{\mathcal{L}(L^2)} \leq \frac{1}{d(z,\Sigma(q))}.$$ 
Let $0<\alpha<\delta$. 
When $z \in \cc^*$ with $\theta_++\frac{\pi}{2}\leq \textrm{arg }z \leq \pi$, or $-\pi < \textrm{arg }z \leq \theta_--\frac{\pi}{2}$, the projection of $z$ onto $\Sigma(q)$ is zero and $d(z,\Sigma(q))=|z|$. When $z \in \cc^*$ with $\theta_++\alpha\leq \textrm{arg }z \leq \theta_++\frac{\pi}{2}$, or $\theta_--\frac{\pi}{2}\leq \textrm{arg }z \leq \theta_--\alpha$, we have
$$\frac{|z|}{d(z,\Sigma(q))}\leq \frac{1}{\sin \alpha} <+\infty.$$
It follows that 
\begin{multline*}
\forall z \in \cc^*, \ \frac{\pi}{2}-(\delta-\alpha) \leq \textrm{arg }z \leq \pi, \textrm{ or } -\pi < \textrm{arg }z \leq -\frac{\pi}{2}+(\delta-\alpha), \\   \|(q^w(x,D_x)-z)^{-1}\|_{\mathcal{L}(L^2)} \leq \frac{1}{(\sin \alpha)|z|},
\end{multline*}
since 
$$\theta_++\alpha \leq \frac{\pi}{2}-(\delta-\alpha) \ \textrm{ and }\ -\frac{\pi}{2}+(\delta-\alpha) \leq \theta_--\alpha.$$
According to the characterization (\ref{dani1}), the semigroup $(e^{-tq^w})_{t \geq 0}$ is then analytic in the angular sector $\Sigma_{\delta}$.

\begin{figure}[ht]
\caption{$|z|\|(q^w(x,D_x)-z)^{-1}\|_{\mathcal{L}(L^2_{\mu})}$ remains bounded in the three lighter angular sectors of the figure.}
\centerline{\includegraphics[scale=0.65]{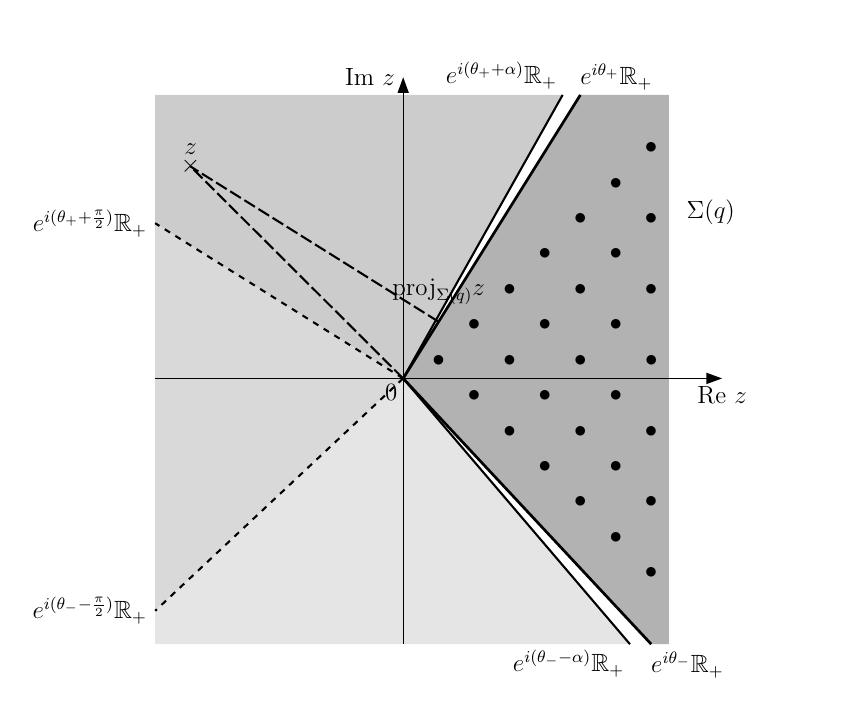}}
\end{figure}
\hspace{-2.5mm}

\end{proof}

\section{Ellipic Ornstein-Uhlenbeck operators}\label{uhl}
In this section, we follow the work~\cite{OPPS2} and make the link between the study of elliptic quadratic operators and the one of Ornstein-Uhlenbeck operators. 

Let $P$ be the Ornstein-Ulhenbeck operator defined in (\ref{jen0}), where $Q$ and $B$ are real $n \times n$-matrices, with $Q$ symmetric positive definite and $B$ satisfying the spectral condition (\ref{pav1}). We may associate to the operator $P$ acting on $L^2_{\mu}=L^2(\rr^n,d\mu)$, where $d\mu(x)=\rho(x)dx$ denotes the invariant measure with the density with respect to the Lebesgue measure (\ref{pav2}), the quadratic operator $\mathscr{L}$ acting on $L^2=L^2(\rr^n,dx)$,
\begin{equation}\label{pav4.5}
\mathscr{L}u=-\sqrt{\rho}P\big((\sqrt{\rho})^{-1}u\big)-\frac{1}{2}\textrm{Tr}(B)u.
\end{equation}
We notice that the localization of the spectrum (\ref{pav1}) implies that 
\begin{equation}\label{ai2}
\textrm{Tr}(B)<0,
\end{equation}
since $B \in M_n(\rr)$. 
Recalling the notation (\ref{not}), a direct computation led in~\cite{OPPS2} (Section~3.1) shows that
$$\mathscr{L} =\frac{1}{2}|Q^{1/2}D_x|^2+\frac{1}{8}|Q^{1/2}Q_{\infty}^{-1}x|^2-i\Big\langle\Big(\frac{1}{2}QQ_{\infty}^{-1}+B\Big)x,D_x\Big\rangle,$$ 
with $D_x=i^{-1}\nabla_x$, where $Q_{\infty}$ is the symmetric positive definite matrix (\ref{pav3}).
The operator $\mathscr{L}$ may be considered as a pseudodifferential operator
$$\mathscr{L}u=q^w(x,D_x)u(x) =\frac{1}{(2\pi)^n}\iint_{\R^{2n}}{e^{i(x-y) \cdot \xi}q\Big(\frac{x+y}{2},\xi\Big)u(y)dyd\xi},$$
defined by the Weyl quantization of the quadratic symbol
\begin{equation}\label{pav6}
q(x,\xi)=\frac{1}{2}|Q^{1/2}\xi|^2+\frac{1}{8}|Q^{1/2}Q_{\infty}^{-1}x|^2-i\Big\langle\Big(\frac{1}{2}QQ_{\infty}^{-1}+B\Big)x,\xi\Big\rangle, \quad (x,\xi) \in \rr^{2n}.
\end{equation}
This explicit computation is performed in~\cite{OPPS2} (Section~3.2), while using the formula (\ref{forsym}).  We notice that the real part of the symbol 
$$\forall (x,\xi) \in \rr^{2n} \setminus \{0\}, \quad (\textrm{Re }q)(x,\xi)=\frac{1}{2}|Q^{1/2}\xi|^2+\frac{1}{8}|Q^{1/2}Q_{\infty}^{-1}x|^2 > 0,$$ 
is a positive definite quadratic form, since $Q$, $Q_{\infty}$ and $B \in M_n(\rr)$, and $Q$, $Q_{\infty}$ are symmetric positive definite matrices. 
We check in~\cite{OPPS2} (Section~3.2) that the Hamilton map of the quadratic form (\ref{pav6}) is given by
\begin{equation}\label{F}
F=\lv\begin{array}{cc}
 -\frac{i}{4}(QQ_{\infty}^{-1}+2B)&\frac{1}{2}Q \\
 -\frac{1}{8}Q_{\infty}^{-1}QQ_{\infty}^{-1}  & \frac{i}{4}(QQ_{\infty}^{-1}+2B)^T       
\end{array}\rv.
\end{equation}
By using that the matrices $Q$, $Q_{\infty}$ and $B$ have real entries, we observe that the real part of the Hamilton map
\begin{equation}\label{nn1}
\textrm{Re }F=\frac{1}{8}\lv\begin{array}{cc}
 0&4Q \\
 -Q_{\infty}^{-1}QQ_{\infty}^{-1}  & 0    
\end{array}\rv,
\end{equation}
is invertible whose inverse is
\begin{equation}\label{ai9}
(\textrm{Re }F)^{-1}=\lv\begin{array}{cc}
 0&  -8Q_{\infty}Q^{-1}Q_{\infty} \\
2Q^{-1}  & 0    
\end{array}\rv.
\end{equation}
According to the definition (\ref{singular}), the singular space associated to the quadratic symbol $q$ is therefore equal to $S=\{0\}$.
It is shown in~\cite{OPPS2} (Corollary~3.5) that the spectrum of the Hamilton map $F$ is given by
\begin{equation}\label{azf2}
\sigma(F)=\sigma\Big(-\frac{i}{2}B\Big) \cup \sigma\Big(\frac{i}{2}B\Big),
\end{equation}
and more precisely that the algebraic multiplicity of $\mu$ as an eigenvalue of $F$ is equal to the sum of the algebraic multiplicity of $\mu$ as an eigenvalue of $\frac{i}{2}B$ and the algebraic multiplicity of $\mu$ as an eigenvalue of $-\frac{i}{2}B$. We deduce from (\ref{pav1}) that 
\begin{equation}\label{jk3}
\{\lambda \in \sigma(F) :  -i\lambda \in \CC_+\}=\Big\{\pm \frac{i}{2} \mu : \mu \in \sigma(B),\ \pm \mu \in \CC_+\Big\}
=\Big\{- \frac{i}{2} \mu : \mu \in \sigma(B)\Big\},
\end{equation}
since $\sigma(B) \subset \CC_-$. Besides, it follows that $r_{\lambda}$ the dimension of the space of generalized eigenvectors of $F$ in $\CC^{2n}$ associated to the eigenvalue $\lambda=- \frac{i}{2} \mu$, with $\mu \in \sigma(B)$, is exactly equal to $\tilde{r}_{\mu}$ the dimension of the space of generalized eigenvectors of $B$ in $\CC^{n}$ associated to the eigenvalue $\mu$, since
$$\sigma\Big(-\frac{i}{2}B\Big) \cap \sigma\Big(\frac{i}{2}B\Big)=\emptyset,$$
as $\sigma(B) \subset \CC_-$. By using that the mappings 
\begin{equation}\label{ai1}
\begin{array}{cc}
T :  L^2_{\mu} & \rightarrow  L^2\\
\ \ \ u &  \mapsto  \sqrt{\rho}u
\end{array}, \qquad \begin{array}{cc}
T^{-1} :  L^2 & \rightarrow L^2_{\mu}\\
\quad \ \ \ u & \mapsto \sqrt{\rho}^{-1}u
\end{array},
\end{equation}
are isometric isomorphisms, it allows to recover from (\ref{sm6}), (\ref{pav4.5}) and (\ref{jk3}) the description for the spectrum of the operator $P$ acting on $L_{\mu}^2$ given in (\ref{rt4}), which is only composed of eigenvalues
\begin{multline}\label{cx3.9}
\sigma(P)=-\frac{1}{2}\textrm{Tr}(B)-\sigma(\mathscr{L})=\Big\{ -\frac{1}{2}\textrm{Tr}(B)-\sum_{\substack{\lambda \in \sigma(F), \\  -i \lambda \in \cc_+} }
{\big{(}r_{\lambda}+2 k_{\lambda}
\big{)}(-i\lambda) :\ k_{\lambda} \in \mathbb{N}}
\Big\}\\ 
=\Big\{-\frac{1}{2}\textrm{Tr}(B)+\frac{1}{2}\sum_{\mu \in \sigma(B)} (\tilde{r}_{\mu}+2k_{\mu})\mu :\ k_{\mu} \in \mathbb{N}\Big\}=\Big\{\sum_{\mu \in \sigma(B)}\mu k_{\mu} :\ k_{\mu} \in \mathbb{N}\Big\},
\end{multline}
since $\Sigma(q|_S) \setminus \{0\}=\emptyset$, as the singular space is zero $S=\{0\}$, and
$$\sum_{\mu \in \sigma(B)}\mu \tilde{r}_{\mu}=\textrm{Tr}(B).$$
We deduce as well from (\ref{pav4.5}), (\ref{ai1}) and (\ref{cx3.9}) the following formula between the norms of the two resolvents
\begin{equation}\label{bn-1}
\forall z \not\in \sigma(\mathscr{L}), \quad  \|(\mathscr{L}-z)^{-1}\|_{\mathcal{L}(L^2)}=\Big\|\Big(P+\frac{1}{2}\textrm{Tr}(B)+z\Big)^{-1}\Big\|_{\mathcal{L}(L_{\mu}^2)},
\end{equation}
where $\|\cdot\|_{\mathcal{L}(L_{\mu}^2)}$ denotes the operator norm in the space of bounded operators on $L^2_{\mu}$. At the level of the semigroups, the following identity holds
\begin{equation}\label{bn1}
\forall t \geq 0, \forall v \in L^2_{\mu}, \quad T(t)v=e^{tP}v=e^{-\frac{t}{2}\textrm{Tr}(B)}(\sqrt{\rho})^{-1}e^{-t\mathscr{L}}(\sqrt{\rho}v).
\end{equation}
Thanks to the general Mehler formula proved by H\"ormander~\cite{mehler} (Theorem~4.1), it is actually possible to compute explicitly the Weyl symbol of the semigroup operator $e^{-t\mathscr{L}}$ at any time $t>0$, viewed as pseudodifferential operator, and to check that the formula (\ref{bn1}) allows to recover the explicit representation (\ref{gen}).
According to~\cite{mehler} (p.~426), the $L^2$-adjoint~$\mathscr{L}^*$ is then the quadratic operator with the Weyl symbol
$$\overline{q}(x,\xi)=\frac{1}{2}|Q^{1/2}\xi|^2+\frac{1}{8}|Q^{1/2}Q_{\infty}^{-1}x|^2+i\Big\langle\Big(\frac{1}{2}QQ_{\infty}^{-1}+B\Big)x,\xi\Big\rangle, \quad (x,\xi) \in \rr^{2n}.$$
According to (\ref{pav4.5}) and (\ref{ai2}), the operator $P$ is selfadjoint on $L^2_{\mu}$ if and only if the operator $\mathscr{L}$ is selfadjoint on $L^2(\rr^n)$, that is, if and only if $q=\overline{q}$, i.e. when
the condition 
$$\frac{1}{2}Q_{\infty}^{-1}Q+B^T=0,$$
holds.
We observe from (\ref{pav4.5}) that the operator $P$ commutes with its $L^2_{\mu}$-adjoint~$P^*$, 
$$\forall u \in \mathscr{S}(\rr^n), \quad [P,P^*](\sqrt{\rho}^{-1}u)=0,$$ 
when acting on $\sqrt{\rho}^{-1}\mathscr{S}(\rr^n)$,
if and only if the operator $\mathscr{L}$ commutes with its $L^2$-adjoint~$\mathscr{L}^*$,  
$$\forall u \in \mathscr{S}(\rr^n), \quad [\mathscr{L},\mathscr{L}^*]u=0,$$ 
when acting on $\mathscr{S}(\rr^n)$, since $[\mathscr{L},\mathscr{L}^*]=\sqrt{\rho}[P,P^*](\sqrt{\rho}^{-1}\cdot)$. According to~\cite{OPPS2} (Lemma~3.3), the commutator $[\mathscr{L},\mathscr{L}^*]$ is given by the quadratic operator
$$[\mathscr{L},\mathscr{L}^*]=\big\langle (QQ_{\infty}^{-1}Q+2QB^T)D_x,D_x \big\rangle+\frac{1}{4}\big\langle (QQ_{\infty}^{-1}Q+2QB^T)Q_{\infty}^{-1}x,Q_{\infty}^{-1}x\big\rangle,$$
and the operator $\mathscr{L}$ commutes with its $L^2$-adjoint $\mathscr{L}^*$, when acting on $\mathscr{S}(\rr^n)$, 
if and only if the symmetric parts of the two matrices $QQ_{\infty}^{-1}Q+2QB^T$ and $Q_{\infty}^{-1}(QQ_{\infty}^{-1}Q+2QB^T)Q_{\infty}^{-1}$ are zero, that is, when
$$QQ_{\infty}^{-1}Q+QB^T+BQ=0 \quad \textrm{and} \quad Q_{\infty}^{-1}(QQ_{\infty}^{-1}Q+QB^T+BQ)Q_{\infty}^{-1}=0,$$
thanks to the symmetry of the two matrices $Q$ and $Q_{\infty}^{-1}$. These two conditions are equivalent to 
$$QQ_{\infty}^{-1}Q+QB^T+BQ=0.$$
By using the formula (\ref{ai4}) which implies that  
$$0=(Q+BQ_{\infty}+Q_{\infty}B^T)Q_{\infty}^{-1}Q=QQ_{\infty}^{-1}Q+BQ+Q_{\infty}B^TQ_{\infty}^{-1}Q,$$
the previous condition is therefore equivalent to 
\begin{multline*}
0=QQ_{\infty}^{-1}Q+QB^T-QQ_{\infty}^{-1}Q-Q_{\infty}B^TQ_{\infty}^{-1}Q=QB^T-Q_{\infty}B^TQ_{\infty}^{-1}Q=Q_{\infty}[Q_{\infty}^{-1}Q,B^T],
\end{multline*}
i.e. that the following commutator is zero 
$$[Q_{\infty}^{-1}Q,B^T]=0.$$ 
This proves that the operator $P$ commutes with its $L^2_{\mu}$-adjoint~$P^*$, 
\begin{equation}\label{ai5}
\forall u \in \mathscr{S}(\rr^n), \quad [P,P^*](\sqrt{\rho}^{-1}u)=0,
\end{equation} 
when acting on $\sqrt{\rho}^{-1}\mathscr{S}(\rr^n)$,
if and only if 
\begin{equation}\label{ai6}
[Q_{\infty}^{-1}Q,B^T]=0.
\end{equation} 
Setting now
$$m_-=\inf_{(x,\xi) \in \mathcal{S}}\Big\langle\Big(-\frac{1}{2}QQ_{\infty}^{-1}-B\Big)x,\xi\Big\rangle, \quad m_+=\sup_{(x,\xi) \in \mathcal{S}}\Big\langle\Big(-\frac{1}{2}QQ_{\infty}^{-1}-B\Big)x,\xi\Big\rangle,$$
with
$$\mathcal{S}=\Big\{(x,\xi) \in \rr^{2n} :\ \frac{1}{2}|Q^{1/2}\xi|^2+\frac{1}{8}|Q^{1/2}Q_{\infty}^{-1}x|^2=1\Big\},$$
we notice from (\ref{pav6}) that the numerical range is 
\begin{equation}\label{dionk1}
\Sigma(q)=\overline{q(\rr^{2n})}=\big\{z=t(1+i\lambda) \in \cc : \  m_- \leq \lambda \leq m_+, \ t \geq 0\big\}.
\end{equation}
We can compute explicitly the two constants $m_{\pm}$.
By changing $x$ in $-x$, and $\xi$ in $-\xi$, we observe that 
\begin{equation}\label{x2}
m_+=- m_-=\sup_{(x,\xi) \in \mathcal{S}}\Big\langle\Big(\frac{1}{2}QQ_{\infty}^{-1}+B\Big)x,\xi\Big\rangle.
\end{equation}
On the other hand, we notice that 
\begin{align}\label{x1}
m_+=& \sup_{\frac{1}{8}|Q^{\frac{1}{2}}Q_{\infty}^{-1}x|^2 \leq 1}\Big(\sup_{\frac{1}{2}|Q^{\frac{1}{2}}\xi|^2=1-\frac{1}{8}|Q^{\frac{1}{2}}Q_{\infty}^{-1}x|^2}\Big\langle\Big(\frac{1}{2}QQ_{\infty}^{-1}+B\Big)x,\xi\Big\rangle\Big)\\ \notag
=&\sup_{\frac{1}{8}|Q^{\frac{1}{2}}Q_{\infty}^{-1}x|^2 \leq 1}\Big(\sup_{|\xi|^2=2-\frac{1}{4}|Q^{\frac{1}{2}}Q_{\infty}^{-1}x|^2}\Big\langle\Big(\frac{1}{2}Q^{1/2}Q_{\infty}^{-1}+Q^{-1/2}B\Big)x,\xi\Big\rangle\Big)\\ \notag
=&\frac{1}{2}\sup_{|Q^{\frac{1}{2}}Q_{\infty}^{-1}x|^2 \leq 8}\|(Q^{1/2}Q_{\infty}^{-1}+2Q^{-1/2}B)x\|\sqrt{2-\frac{1}{4}|Q^{1/2}Q_{\infty}^{-1}x|^2}.
\end{align}
It follows from (\ref{x1}) that 
\begin{align*}
m_+= & \ \frac{1}{2}\sup_{|y|^2 \leq 8}\|(Q^{1/2}Q_{\infty}^{-1}+2Q^{-1/2}B)Q_{\infty}Q^{-1/2}y\|\sqrt{2-\frac{1}{4}|y|^2}\\
= & \ \frac{1}{2}\sup_{|y|^2 \leq 8}\|(I_n+2Q^{-1/2}BQ_{\infty}Q^{-1/2})y\|\sqrt{2-\frac{1}{4}|y|^2}\\
=& \ \|I_n+2Q^{-1/2}BQ_{\infty}Q^{-1/2}\|\left(\sup_{0 \leq t \leq \sqrt{8}}\frac{t}{2}\sqrt{2-\frac{t^2}{4}}\right)=\|I_n+2Q^{-1/2}BQ_{\infty}Q^{-1/2}\|.
\end{align*}
As the matrix (\ref{skew}) is skew-selfadjoint, we obtain that 
\begin{equation}\label{ai7}
m_{\pm}=\pm\|I_n+2Q^{-1/2}Q_{\infty}B^TQ^{-\frac{1}{2}}\|.
\end{equation}
It follows from (\ref{dionk1}) and (\ref{ai7}) that 
\begin{equation}\label{fr21}
\Sigma(q)=\big\{z=t(1+i\lambda) \in \cc : \  \lambda \in \rr, \  |\lambda| \leq \|I_n+2Q^{-1/2}Q_{\infty}B^TQ^{-\frac{1}{2}}\|, \ t \geq 0\big\}.
\end{equation}
We recover from Proposition~\ref{propo2} and (\ref{bn1}) the result established in~\cite{sector} that the two semigroups $(e^{-t\mathscr{L}})_{t \geq 0}$ and $(e^{tP})_{t \geq 0}$ are analytic in the angular sector
\begin{equation}\label{cx1}
\Sigma_{\theta_2}=\{z \in \cc^* : |\arg z| <\theta_2\} \cup \{0\},
\end{equation}
with 
\begin{equation}\label{cx2}
\textrm{cotan }\theta_2=\|I_n+2Q^{-1/2}Q_{\infty}B^TQ^{-1/2}\|.
\end{equation}
Writing 
\begin{equation}\label{nn7}
\partial\Sigma(q)=e^{i\theta_-}\rr_+ \cup e^{i\theta_+}\rr_+,
\end{equation}
we have indeed that 
\begin{equation}\label{nn8}
0 \leq \theta_+=-\theta_- <\frac{\pi}{2}
\end{equation}
and
\begin{equation}\label{nn5}
0<\delta=\inf\Big(\frac{\pi}{2}-\theta_+, \theta_-+\frac{\pi}{2}\Big)=\frac{\pi}{2}-\theta_+=\theta_2\leq \frac{\pi}{2}.
\end{equation}

Let us now explain why this angle of the analyticity sector $\theta_2$ is actually sharp and how it relates with the spectral angle of the Ornstein-Uhlenbeck operator $P$ given by the spectral angle of the matrix $B$, 
$$0 \leq \theta_{\textrm{spec}}:=\textrm{arctan}\Big(\sup_{z \in \sigma(B)}\Big|\frac{\textrm{Im }z}{\textrm{Re }z}\Big|\Big) < \frac{\pi}{2}.$$
To that end, we study under which condition the quadratic operator $\mathscr{L}$ admits some eigenvalues that are regularly spaced out along an half-line parallel to one of the half-lines composing the boundary of the numerical range $\partial \Sigma(q)$. 
Let us consider the two quadratic forms obtained after rotations preserving the non-negativity, but not the positive definiteness of their real parts
$$q_1(x,\xi)=(\|I_n+2Q^{-1/2}Q_{\infty}B^TQ^{-1/2}\|+i)q(x,\xi)$$
and
$$q_2(x,\xi)=(\|I_n+2Q^{-1/2}Q_{\infty}B^TQ^{-1/2}\|-i)q(x,\xi),$$
i.e.
$$\forall j =1,2, \quad \textrm{Re }q_j \geq 0, \quad  \textrm{Re }q_j \not\gg 0.$$
These two quadratic forms are obviously elliptic on the phase space $\rr_x^{n} \times \rr_{\xi}^n$, as the quadratic form $q$ is.
If $j \in \{1,2\}$, the Hamilton map $F_j$ of the quadratic form $q_j$ is given by  
$$F_j=\big(\|I_n+2Q^{-1/2}Q_{\infty}B^TQ^{-1/2}\|+i(-1)^{j+1}\big)F.$$
It implies that 
\begin{equation}\label{a1}
\textrm{Re }F_j=\|I_n+2Q^{-1/2}Q_{\infty}B^TQ^{-1/2}\|\textrm{Re }F+(-1)^{j}\textrm{Im }F,
\end{equation}
\begin{equation}\label{a2}
\textrm{Im }F_j=\|I_n+2Q^{-1/2}Q_{\infty}B^TQ^{-1/2}\|\textrm{Im }F+(-1)^{j+1}\textrm{Re }F.
\end{equation}
For $X \in \rr^{2n}$, the non-negativity of the quadratic form $\textrm{Re }q_j \geq 0$ implies that 
$$\textrm{Re }q_j(X)=\sigma\big(X,(\textrm{Re }F_j)X\big)=0 \Longleftrightarrow (\textrm{Re }F_j)X=0.$$
Indeed, if $\textrm{Re }q_j(X)=0$, the Cauchy-Schwarz inequality ensures that 
$$\forall Y \in \rr^{2n}, \quad (\textrm{Re }q_j)(Y;X)^2= \sigma\big(Y,(\textrm{Re }F_j)X\big)^2 \leq [\textrm{Re }q_j(Y)][\textrm{Re }q_j(X)]=0,$$
with $\textrm{Re }q_j(Y;X)$ denoting the polarized form associated to $\textrm{Re }q_j$. The non-degeneracy of the symplectic form then implies that $(\textrm{Re }F_j)X=0$.
As the quadratic form $\textrm{Re }q \gg 0$ is positive definite, the above characterization allows to recover that the Hamilton map $\textrm{Re }F$ is invertible.
We deduce from (\ref{a1}) that for all $X=(x,\xi) \in \rr^{2n}$,
\begin{multline}\label{a0}
\textrm{Re }q_j(X)=0  \Leftrightarrow (\textrm{Re }F_j)X=0\\ \Leftrightarrow (\textrm{Re }F)^{-1}(\textrm{Im }F)X=(-1)^{j+1}\|I_n+2Q^{-1/2}Q_{\infty}B^TQ^{-1/2}\|X.
\end{multline}
We observe from (\ref{F}) that
\begin{equation}\label{pav22}
\textrm{Im }F=\frac{1}{4}\lv\begin{array}{cc}
 -(QQ_{\infty}^{-1}+2B)&0 \\
0  & (QQ_{\infty}^{-1}+2B)^T       
\end{array}\rv.
\end{equation}
It follows from (\ref{ai9}) and (\ref{pav22}) that 
\begin{equation}\label{a3}
(\textrm{Re }F)^{-1}\textrm{Im }F=\lv\begin{array}{cc}
0 & -2Q_{\infty}Q^{-1}Q_{\infty}(Q_{\infty}^{-1}Q+2B^T)\\
-\frac{1}{2}Q^{-1}(QQ_{\infty}^{-1}+2B)    & 0     
\end{array}\rv.
\end{equation}
We notice that 
\begin{equation}\label{a4}
-2Q_{\infty}Q^{-1}Q_{\infty}(Q_{\infty}^{-1}Q+2B^T)=-2Q_{\infty}Q^{-1/2}(I_n+2Q^{-1/2}Q_{\infty}B^TQ^{-1/2})Q^{1/2},
\end{equation}
\begin{equation}\label{a5}
-\frac{1}{2}Q^{-1}(QQ_{\infty}^{-1}+2B)=-\frac{1}{2}Q^{-1/2}(I_n+2Q^{-1/2}BQ_{\infty}Q^{-1/2})Q^{1/2}Q_{\infty}^{-1}.
\end{equation}
As the matrix
$$A=I_n+2Q^{-1/2}Q_{\infty}B^TQ^{-1/2},$$
is skew-adjoint,  we deduce from (\ref{a0}), (\ref{a3}) (\ref{a4}) and (\ref{a5}) that for all $(x,\xi) \in \rr^{2n}$, we have $(x,\xi) \in \textrm{Ker}(\textrm{Re }F_j)$ if and only if 
\begin{equation}\label{a6} 
\left\{
\begin{array}{cl}
AQ^{1/2}\xi&=\frac{1}{2}(-1)^{j}\|A\|  Q^{1/2}Q_{\infty}^{-1}x, \\
AQ^{1/2}Q_{\infty}^{-1}x&=2(-1)^{j+1}\|A\|Q^{1/2}\xi.
\end{array}
\right.
\end{equation}
Setting 
$$D=Q^{1/2}Q_{\infty}^{-1}Q^{1/2},$$
we can rewrite (\ref{a6}) as 
\begin{equation}\label{a6.4} 
\left\{
\begin{array}{cl}
AQ^{1/2}\xi&=\frac{1}{2}(-1)^{j}\|A\|  DQ^{-1/2}x, \\
ADQ^{-1/2}x&=2(-1)^{j+1}\|A\|Q^{1/2}\xi.
\end{array}
\right.
\end{equation}
We use the following instrumental lemma whose proof is given in Appendix (Section~\ref{appendix}): 

\bigskip

\begin{lemma}\label{lem1.1}
If $j \in \{1,2\}$, the singular space $S_j$ of the quadratic form $q_j$ satisfies 
\begin{multline*}
S_j=\Big(\bigcap_{k=0}^{2n-1}\emph{\textrm{Ker}}\big(\emph{\textrm{Re }}F_j(\emph{\textrm{Im }}F_j)^k\big)\Big)\cap \rr^{2n} \neq \{0\} \\ \Rightarrow \Big( \bigcap_{k=0}^{n-1}\emph{\textrm{Ker}}
\big((A^2+\|A\|^2)D^{k}\big)\cap \emph{\textrm{Ker}}
\big([A,D^k]\big)\Big)\cap \rr^{n}\neq\{0\}.
\end{multline*}
If $j \in \{1,2\}$ and $A \neq 0$,  the singular space $S_j$ of the quadratic form $q_j$ satisfies 
\begin{multline*}
\Big( \bigcap_{k=0}^{n-1}\emph{\textrm{Ker}}
\big((A^2+\|A\|^2)D^{k}\big)\cap \emph{\textrm{Ker}}
\big([A,D^k]\big)\Big)\cap \rr^{n}\neq\{0\} \\ \Rightarrow S_j=\Big(\bigcap_{k=0}^{2n-1}\emph{\textrm{Ker}}\big(\emph{\textrm{Re }}F_j(\emph{\textrm{Im }}F_j)^k\big)\Big)\cap \rr^{2n} \neq \{0\} .
\end{multline*}
\end{lemma}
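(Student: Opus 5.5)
The plan is to work entirely in rescaled coordinates in which both $\textrm{Re }F_j$ and $\textrm{Im }F$ are expressed only through the matrices $A$ and $D$. Set
$$w=\tfrac12 DQ^{-1/2}x=\tfrac12 Q^{1/2}Q_\infty^{-1}x,\qquad v=Q^{1/2}\xi .$$
This is a linear isomorphism of $\rr^{2n}$.

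\emph{Step 1: the kernel of $\textrm{Re }F_j$.} In these coordinates the system (\ref{a6.4}) should read $Av=(-1)^j\|A\|w$ and $Aw=(-1)^{j+1}\|A\|v$. It follows that $(A^2+\|A\|^2)w=0$. When $A\neq 0$, $v$ is determined by $w$ through $v=(-1)^{j+1}Aw/\|A\|$. Hence $\textrm{Ker}(\textrm{Re }F_j)$ is identified with $\textrm{Ker}(A^2+\|A\|^2)\cap\rr^n$ via this map. When $A=0$ the situation is degenerate, and only the first implication of the lemma will be claimed.

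\emph{Step 2: the action of $\textrm{Im }F$.} Using $A^T=-A$, one gets $QQ_\infty^{-1}+2B=-Q^{1/2}ADQ^{-1/2}$. Together with (\ref{pav22}), this should show that in the $(w,v)$ coordinates
$$\textrm{Im }F\simeq \tfrac14\,\textrm{diag}(DA,\,-DA).$$

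\emph{Step 3: reducing $S_j$ to an invariance problem.} By Cayley--Hamilton, $S_j$ is the largest $\textrm{Im }F_j$-invariant subspace contained in $\textrm{Ker}(\textrm{Re }F_j)$. By (\ref{a1})--(\ref{a2}), on $\textrm{Ker}(\textrm{Re }F_j)$ we have $\|A\|\textrm{Re }F=(-1)^{j+1}\textrm{Im }F$. Hence on this kernel $\textrm{Im }F_j$ acts as a nonzero scalar multiple of $\textrm{Im }F$. So $S_j$ is also the largest $\textrm{Im }F$-invariant subspace of $\textrm{Ker}(\textrm{Re }F_j)$.

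Invariance of the graph $\{(w,(-1)^{j+1}Aw/\|A\|)\}$ under $\textrm{diag}(DA,-DA)$ then requires the following for every $w$ in the relevant subspace $W$:
\begin{enumerate}
\item $DAw\in W$;
\item $-DA^2w=A(DAw)$, which, using $A^2w=-\|A\|^2w$, becomes $\|A\|^2Dw=ADAw$.
\end{enumerate}
Thus $S_j\neq\{0\}$ amounts to the existence of a nonzero subspace $W\subset\textrm{Ker}(A^2+\|A\|^2)\cap\rr^n$ that is invariant under $DA$ and on which $ADA=-DA^2$, i.e.\ $[A,D]A=0$ on $W$.

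\emph{Step 4: matching the condition (\ref{a21}).} The remaining task is to show that such a $W$ exists if and only if the subspace
$$\mathcal W=\bigcap_{k=0}^{n-1}\textrm{Ker}\big((A^2+\|A\|^2)D^k\big)\cap\textrm{Ker}\big([A,D^k]\big)\cap\rr^n$$
is nonzero.

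For ``$\Leftarrow$'' I would check that $\mathcal W$ is invariant under both $D$ and $A$, by Cayley--Hamilton. For $A$: if $w\in\mathcal W$ then $D^kAw=AD^kw$, so $(A^2+\|A\|^2)D^kAw=A(A^2+\|A\|^2)D^kw=0$ and $[A,D^k]Aw=0$. Since $A$ and $D$ commute on $\mathcal W$, the required identity $ADAw=DA^2w$ holds there. One still has to reconcile it with $-DA^2$ coming from the sign in Step 3, possibly by using $A^2=-\|A\|^2$ on $\mathcal W$. Then $W=\mathcal W$ works, which gives $S_j\neq\{0\}$ when $A\neq0$.

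For ``$\Rightarrow$'', start from a nonzero $W$ (or from $S_j$ itself, which also covers $A=0$). Iterate the invariance to get $A$- and $D$-stability of a suitable nonzero subspace, so that $[A,D^k]=0$ and $(A^2+\|A\|^2)D^k=0$ on it. Any nonzero vector of it then lies in $\mathcal W$.

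I expect Step 4's direction ``$S_j\neq\{0\}\Rightarrow$(\ref{a21})'' to be the main obstacle. Invariance only gives relations on $A$-images such as $[A,D]Aw=0$, and these must be upgraded to commutation on $w$ itself. My first attempt would exploit that $A$ is invertible on $\textrm{Ker}(A^2+\|A\|^2)$ when $A\neq0$, since there $A^2=-\|A\|^2$. Then $A(W)=W$, and the relation transfers back to $W$. The case $A=0$ I would treat separately: there $S_j=\textrm{Ker}(\textrm{Re }F_j)$, and (\ref{a21}) holds trivially since $[A,D^k]=0$ and $A^2+\|A\|^2=0$.
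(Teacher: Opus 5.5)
\textbf{There is a genuine gap: a sign error in Step~2 breaks the ``$\Leftarrow$'' direction as written, and the ``$\Rightarrow$'' direction rests on an unjustified claim. Both can be repaired, and the repaired proof is a genuinely different route from the paper's.}

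\textbf{The sign in Step~2.} Since $A^T=-A$ and $D^T=D$, transposing $QQ_\infty^{-1}+2B=-Q^{1/2}ADQ^{-1/2}$ gives $(QQ_\infty^{-1}+2B)^T=Q^{-1/2}DAQ^{1/2}$. So by (\ref{pav22}) \emph{both} diagonal blocks become $DA$, and in your coordinates $\textrm{Im }F\simeq\frac14\,\textrm{diag}(DA,DA)$. As a check, $\textrm{Re }F$ acts as $(w,v)\mapsto\frac14(Dv,-Dw)$. On your Step~1 kernel one has $\textrm{Im }F=(-1)^{j+1}\|A\|\,\textrm{Re }F$, which is compatible only with the $+$ sign.

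With your sign, the invariance condition is $ADAw=-DA^2w$, i.e.\ $(AD+DA)Aw=0$, not $[A,D]Aw=0$ as you write. On $\mathcal W$, where $A$ and $D$ commute and $A^2=-\|A\|^2$, it would read $-\|A\|^2Dw=\|A\|^2Dw$, forcing $w=0$. So the ``reconciliation'' you hope for in the ``$\Leftarrow$'' direction is impossible as written.

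Once the sign is corrected, the condition becomes $DAW\subset W$ and $[A,D]A=0$ on $W$, and your ``$\Leftarrow$'' argument goes through. The space $\mathcal W\subset\textrm{Ker}(A^2+\|A\|^2)$ is stable under $D$ and under $A$; for the commutators, use $[A,D^k]Aw=A^2D^kw-D^kA^2w=0$. Hence $\mathcal W$ is stable under $DA$, and $[A,D]A\mathcal W\subset[A,D]\mathcal W=\{0\}$. This maximal-invariant-subspace route is cleaner than the paper's explicit construction of $(x_0,\xi_0)$ and the formulas (\ref{fr5})--(\ref{fr8}).

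\textbf{The ``$\Rightarrow$'' direction for $A\neq0$.} Here $W$ is only known to be $DA$-stable. Invertibility of $A$ on $\textrm{Ker}(A^2+\|A\|^2)$ does not give $A(W)=W$. The missing argument is as follows.
\begin{itemize}
\item For $w\in W$, $ADAw=DA^2w=-\|A\|^2Dw$.
\item Applying $A$ and using $DAw\in W\subset\textrm{Ker}(A^2+\|A\|^2)$ gives $DAw=ADw$, so $[A,D]=0$ on $W$.
\item Then $Dw=-\|A\|^{-2}A(DAw)\in AW$. So $W+AW$ is stable under both $A$ and $D$, lies in $\textrm{Ker}(A^2+\|A\|^2)$, and is killed by $[A,D]$ (on $AW$ this is your hypothesis).
\item Induction on $k$ then gives $W+AW\subset\mathcal W$.
\end{itemize}

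The paper avoids this issue by using the other expression of $\textrm{Im }F_j$ on $\textrm{Ker}(\textrm{Re }F_j)$, namely $(-1)^{j+1}(1+\|A\|^2)\,\textrm{Re }F$. In your coordinates this is $(w,v)\mapsto c\,(Dv,-Dw)$ with $c\neq0$. Iterating this map is exactly the induction (\ref{vn1}), and it shows directly that both components $w$ and $v$ of any element of $S_j$ lie in $\mathcal W$.
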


\bigskip

Let assume for now that $A \neq 0$. According to (\ref{fr21}), the angular sector $\Sigma(q)$ has a positive angle
\begin{equation}\label{nn6}
\Sigma(q)=\big\{z=t(1+i\lambda) \in \cc : \  \lambda \in \rr, \  |\lambda| \leq \|A\|, \ t \geq 0\big\},
\end{equation}
since $\|A\|>0$.
We can deduce from Proposition~\ref{prop4} and Lemma~\ref{lem1.1} that the spectra of the two quadratic operators $q_1^w(x,D_x)$ and  $q_2^w(x,D_x)$ both contain some eigenvalues regularly spaced out along an half-line parallel to the imaginary axis in the complex plane if and only if 
\begin{equation}\label{fr20}
\Big( \bigcap_{k=0}^{n-1}\textrm{Ker}\big((A^2+\|A\|^2)D^{k}\big)\cap \textrm{Ker}\big([A,D^k]\big)\Big)\cap \rr^{n}\neq\{0\}.
\end{equation}
When the condition (\ref{fr20}) holds, the spectrum of the quadratic operator $q^w(x,D_x)$ thus contains some eigenvalues regularly spaced out along two half-lines parallel to the half-lines $D_{\pm}$ composing the boundary of the numerical range $\partial \Sigma(q)=D_+ \cup D_-$, with one parallel to $D_+$, the other parallel to $D_-$,
$$D_{\pm}=\big\{z=t(1\pm i\|A\|) \in \cc : \ t \geq 0\big\}.$$
According to the characterization (\ref{dani1}), the localization of these eigenvalues prevents the semigroup $(e^{-t\mathscr{L}})_{t \geq 0}$ to be analytic in the angular sector $\Sigma_{\delta'}$
$$\Sigma_{\delta'}=\{z \in \cc^* : |\arg z| <\delta'\} \cup \{0\},$$
with $\theta_2 <\delta'$, since $\theta_{\pm}=\pm (\frac{\pi}{2}-\theta_2)$, and
$$\partial\Sigma(q)=e^{i\theta_-}\rr_+ \cup e^{i\theta_+}\rr_+=e^{-i(\frac{\pi}{2}-\theta_2)}\rr_+ \cup e^{i(\frac{\pi}{2}-\theta_2)}\rr_+.$$ 
According to (\ref{bn1}), it implies that the semigroup $(T(t))_{t \geq 0}=(e^{tP})_{t \geq 0}$ cannot be analytic in the angular sector $\Sigma_{\delta'}$ with $\theta_2 <\delta'$. We recover here that $\theta_2$ the angle of the analyticity sector of the semigroup generated by the Ornstein-Uhlenbeck operator $P$ is sharp. In this case, we also notice that the spectral angle of the quadratic operator $\mathscr{L}$, which is from (\ref{cx3.9}) the same as the spectral angle of the operator $P$ is then exactly equal to 
$$\theta_{\textrm{spec}}=\theta_+=\frac{\pi}{2}-\theta_2,$$
since there are some eigenvalues regularly spaced out along two half-lines parallel to the half-lines $D_{\pm}$ composing the boundary of the numerical range $\partial \Sigma(q)=D_+ \cup D_-$. In this case, the spectral angle of the operator $P$ in $L^2_{\mu}$ and the angle of the analyticity sector of its associated semigroup $(T(t))_{t \geq 0}$ are complementary
$$\theta_2 +\theta_{\textrm{spec}}= \frac{\pi}{2}.$$
If $A=0$, it follows from (\ref{15}) and (\ref{nn6}) that the numerical range $\Sigma(q)$ containing the spectrum of the quadratic operator $\mathscr{L}$ is equal to $\Sigma(q)=\rr_+$. The spectral angle of the operator $\mathscr{L}$, as well as the one of the Ornstein-Uhlenbeck operator $P$ is then zero $\theta_{\textrm{spec}}=0$.
According to (\ref{cx1}) and (\ref{cx2}), the angle of the analyticity sector is then maximal and equal to $\frac{\pi}{2}$, since
$$\textrm{cotan }\theta_2=\|I_n+2Q^{-1/2}Q_{\infty}B^TQ^{-1/2}\|=\|A\|=0.$$
When $A=0$, the spectral angle of the operator $P$ in $L^2_{\mu}$ and the angle of the analyticity sector of its associated semigroup $(T(t))_{t \geq 0}$ are as well complementary.
As the condition 
$$\Big( \bigcap_{k=0}^{n-1}\textrm{Ker}\big((A^2+\|A\|^2)D^{k}\big)\cap \textrm{Ker}\big([A,D^k]\big)\Big)\cap \rr^{n}=\rr^{n}\neq\{0\},$$
trivially holds when $A=0$, we deduce that the condition (\ref{fr20}) always implies that the spectral angle of the operator $P$ in $L^2_{\mu}$ and the angle of the analyticity sector of its associated semigroup $(T(t))_{t \geq 0}$ are complementary.

Let us now consider the case when 
\begin{equation}\label{cx4}
\Big( \bigcap_{k=0}^{n-1}\textrm{Ker}\big((A^2+\|A\|^2)D^{k}\big)\cap \textrm{Ker}\big([A,D^k]\big)\Big)\cap \rr^{n}=\{0\}.
\end{equation}
We then deduce from Proposition~\ref{prop4} and Lemma~\ref{lem1.1} that the spectra of the two quadratic operators $q_1^w(x,D_x)$ and  $q_2^w(x,D_x)$ do not contain some eigenvalues regularly spaced out along any half-line parallel to the imaginary axis. According to the structure of the spectrum of the quadratic operator $\mathscr{L}$ 
given in (\ref{15}), it implies from (\ref{nn7}), (\ref{nn8}) and (\ref{nn5}) that the spectral angle of the operator $\mathscr{L}$ is strictly smaller
$$0 \leq \theta_{\textrm{spec}}< \theta_+=\frac{\pi}{2}-\theta_2,$$
than the angle defining the numerical range
$$\partial\Sigma(q)=e^{i\theta_-}\rr_+ \cup e^{i\theta_+}\rr_+=e^{-i(\frac{\pi}{2}-\theta_2)}\rr_+ \cup e^{i(\frac{\pi}{2}-\theta_2)}\rr_+.$$
In this case, the spectral angle of the operator $P$ in $L^2_{\mu}$ and the angle of the analyticity sector of its associated semigroup $(T(t))_{t \geq 0}$ fail to be complementary.
As highlighted in Example~1 (Section~\ref{main}), the condition (\ref{cx4}) can only hold in the case of a non-normal Ornstein-Ulhenbeck operator when acting on $\sqrt{\rho}^{-1}\mathscr{S}(\rr^n)$, that is, when 
$$[Q_{\infty}^{-1}Q,B^T]\neq 0.$$ 
If $[Q_{\infty}^{-1}Q,B^T]\neq 0$, the pseudospectral phenomena uncovered in Theorem~2.4 in~\cite{OPPS2}, namely the fact that the resolvent of the Ornstein-Uhlenbeck operator $P$ exhibits rapid growth in norm along all half-lines starting from $-\frac{1}{2}\textrm{Tr}(B)$ and contained in the open angular sector 
$$-\frac{1}{2}\textrm{Tr}(B)-\stackrel{\circ}{\Sigma(q)}=\Big\{-\frac{1}{2}\textrm{Tr}(B)-t(1+i\lambda) \in \cc^* : \  |\lambda| < \|A\|, \ t>0\Big\},$$ 
that is
$$\forall |\lambda| < \|A\|, \forall N \in \mathbb{N}, \quad \lim_{t \to +\infty}\frac{\big\|\big(P+\frac{1}{2}\textrm{Tr}(B)+t(1+i\lambda)\big)^{-1}\big\|_{\mathcal{L}(L^2_{\mu})}}{t^N}=+\infty,$$
actually prevent the angle of the analyticity sector of the semigroup to be larger than $\theta_2$, even if the spectrum of $P$ lies more deeply inside the angular sector $-\frac{1}{2}\textrm{Tr}(B)-\Sigma(q)$, as the spectral angle satisfies
$$0 \leq \theta_{\textrm{spec}}< \theta_+=\frac{\pi}{2}-\theta_2.$$
This ends the proof of Theorem~\ref{th}.

\section{Appendix}\label{appendix}

This appendix is devoted to the proof of Lemma~\ref{lem1.1}. The notations used here are the ones introduced in Section~\ref{uhl}.

We first establish by recurrence on the non-negative integer $m \in \nn$ that
$$(x,\xi) \in \Big(\bigcap_{k=0}^m\textrm{Ker}\big(\textrm{Re }F_j(\textrm{Im }F_j)^k\big) \Big)\cap \rr^{2n},$$
implies that for all $0 \leq k \leq m$,
\begin{equation}\label{vn1}
\left\{
\begin{array}{rl}
AD^kQ^{1/2}\xi=& \frac{1}{2}(-1)^{j}\|A\|D^{k+1}Q^{-1/2}x,\\
AD^{k+1}Q^{-1/2}x=& 2(-1)^{j+1}\|A\|D^kQ^{1/2}\xi,\\
(A^2+\|A\|^2)D^{k+1}Q^{-1/2}x=& 0,\\
(A^2+\|A\|^2)D^k Q^{1/2}\xi=& 0,\\
\textrm{[}A,D^k\textrm{]}DQ^{-1/2}x=& 0,\\
\textrm{[}A,D^k\textrm{]}Q^{1/2}\xi=& 0.\\
\end{array}
\right.
\end{equation}
When $m=0$, the two first equalities in (\ref{vn1}) are given by (\ref{a6.4}), whereas the last two are trivial as the commutator $[A,I]=0$ is zero. On the other hand, we deduce from (\ref{a6.4}) that 
$$A^2Q^{1/2}\xi=\frac{1}{2}(-1)^{j}\|A\|  ADQ^{-1/2}x=-\|A\|^2Q^{1/2}\xi$$
and
$$A^2DQ^{-1/2}x=2(-1)^{j+1}\|A\|AQ^{1/2}\xi=-\|A\|^2DQ^{-1/2}x.$$
It proves that the third and fourth equalities in (\ref{vn1}) hold as well in this case.
Let us now assume that the above implication holds at the rank $m$ and let us consider 
$$(x,\xi) \in \Big(\bigcap_{k=0}^{m+1}\textrm{Ker}\big(\textrm{Re }F_j(\textrm{Im }F_j)^k\big)\Big) \cap \rr^{2n}.$$
We have in particular that 
$$(x,\xi) \in \Big(\bigcap_{k=0}^{m}\textrm{Ker}\big(\textrm{Re }F_j(\textrm{Im }F_j)^k\big)\Big)\cap \rr^{2n}$$ 
and 
$$(\tilde{x},\tilde{\xi})=(\textrm{Im }F_j)(x,\xi) \in \Big(\bigcap_{k=0}^{m}\textrm{Ker}\big(\textrm{Re }F_j(\textrm{Im }F_j)^k\big)\Big)\cap \rr^{2n}.$$
With $X=(x,\xi)$ and $\tilde{X}=(\tilde{x},\tilde{\xi})$, it follows from (\ref{nn1}), (\ref{a2}) and (\ref{a0}) that 
\begin{multline*}
(\textrm{Im }F_j)X=(\textrm{Re }F)(\|A\|(\textrm{Re }F)^{-1}(\textrm{Im }F)X+(-1)^{j+1}X)\\
=(-1)^{j+1}(1+\|A\|^2)(\textrm{Re }F)X,
\end{multline*}
that is
\begin{equation}\label{nn2}
\left\{
\begin{array}{rl}
\tilde{x}=&\frac{1}{2}(-1)^{j+1}(1+\|A\|^2)Q\xi\\
\tilde{\xi}=&\frac{1}{8}(-1)^{j}(1+\|A\|^2)Q_{\infty}^{-1}QQ_{\infty}^{-1}x=\frac{1}{8}(-1)^{j}(1+\|A\|^2)Q^{-1/2}D^2Q^{-1/2}x
\end{array}
\right.
\end{equation}
By applying the first equation in (\ref{vn1}) for $(\tilde{x},\tilde{\xi})$ at the rank $m$, 
$$AD^mQ^{1/2}\tilde{\xi}=\frac{1}{2}(-1)^{j}\|A\|D^{m+1}Q^{-1/2}\tilde{x},$$
we obtain from (\ref{nn2}) that 
$$AD^mQ^{1/2}Q^{-1/2}D^2Q^{-1/2}x=2(-1)^{j+1}\|A\|D^{m+1}Q^{-1/2}Q\xi,$$
i.e. the second equation in (\ref{vn1}) for $(x,\xi)$ at the rank $m+1$,
$$AD^{m+2}Q^{-1/2}x=2(-1)^{j+1}\|A\|D^{m+1}Q^{1/2}\xi.$$
By applying the second equation in (\ref{vn1}) for $(\tilde{x},\tilde{\xi})$ at the rank $m$, 
$$AD^{m+1}Q^{-1/2}\tilde{x}= 2(-1)^{j+1}\|A\|D^mQ^{1/2}\tilde{\xi},$$
we obtain from (\ref{nn2}) that 
$$AD^{m+1}Q^{-1/2}Q\xi= \frac{1}{2}(-1)^{j}\|A\|D^mQ^{1/2}Q^{-1/2}D^2Q^{-1/2}x,$$
i.e. the first equation in (\ref{vn1}) for $(x,\xi)$ at the rank $m+1$,
$$AD^{m+1}Q^{1/2}\xi= \frac{1}{2}(-1)^{j}\|A\|D^{m+2}Q^{-1/2}x.$$
By applying the third equation in (\ref{vn1}) for $(\tilde{x},\tilde{\xi})$ at the rank $m$, 
$$(A^2+\|A\|^2)D^{m+1}Q^{-1/2}\tilde{x}= 0,$$
we obtain from (\ref{nn2}) that 
$$(A^2+\|A\|^2)D^{m+1}Q^{-1/2}Q\xi= 0,$$
i.e. the fourth equation in (\ref{vn1}) for $(x,\xi)$ at the rank $m+1$,
$$(A^2+\|A\|^2)D^{m+1}Q^{1/2}\xi= 0.$$
By applying the fourth equation in (\ref{vn1}) for $(\tilde{x},\tilde{\xi})$ at the rank $m$, 
$$(A^2+\|A\|^2)D^m Q^{1/2}\tilde{\xi}= 0,$$
we obtain from (\ref{nn2}) that 
$$(A^2+\|A\|^2)D^m Q^{1/2}Q^{-1/2}D^2Q^{-1/2}x= 0,$$
i.e. the third equation in (\ref{vn1}) for $(x,\xi)$ at the rank $m+1$,
$$(A^2+\|A\|^2)D^{m+2}Q^{-1/2}x= 0.$$
Next, it follows from the second equation in (\ref{vn1}) for $(x,\xi)$ at the rank $0$ and $m+1$ that 
\begin{multline*}
[A,D^{m+1}]DQ^{-1/2}x=(AD^{m+2}Q^{-1/2}x)-D^{m+1}(ADQ^{-1/2}x)\\
=2(-1)^{j+1}\|A\|(D^{m+1}Q^{1/2}\xi-D^{m+1}Q^{1/2}\xi)=0,
\end{multline*}
i.e the fifth equation in (\ref{vn1}) for $(x,\xi)$ at the rank $m+1$.
Then, it follows from the first equation in (\ref{vn1}) for $(x,\xi)$ at the rank $0$ and $m+1$ that 
\begin{multline*}
[A,D^{m+1}]Q^{1/2}\xi=(AD^{m+1}Q^{1/2}\xi)-D^{m+1}(AQ^{1/2}\xi)\\
=\frac{1}{2}(-1)^{j}\|A\|(D^{m+2}Q^{-1/2}x-D^{m+1}DQ^{-1/2}x)=0,
\end{multline*}
i.e the last equation in (\ref{vn1}) for $(x,\xi)$ at the rank $m+1$.

By using next that the matrices $D$ and $Q$ are invertible, we deduce from (\ref{vn1}) that for all $m \in \nn$,
\begin{multline}\label{fr14}
\Big(\bigcap_{k=0}^m\textrm{Ker}\big(\textrm{Re }F_j(\textrm{Im }F_j)^k\big)\Big)\cap \rr^{2n} \neq \{0\}\\ \Rightarrow \Big( \bigcap_{k=0}^{m}\textrm{Ker}
\big((A^2+\|A\|^2)D^{k}\big)\cap \textrm{Ker}
\big([A,D^k]\big)\Big)\cap \rr^{n}\neq\{0\}.
\end{multline}
Let us assume now that $A \neq 0$ and that 
$$\Big( \bigcap_{k=0}^{m}\textrm{Ker}\big((A^2+\|A\|^2)D^{k}\big)\cap \textrm{Ker}\big([A,D^k]\big)\Big)\cap \rr^{n}\neq\{0\},$$
with $m \in \nn$. Let 
$$\zeta_0 \in \Big( \bigcap_{k=0}^{m}\textrm{Ker}\big((A^2+\|A\|^2)D^{k}\big)\cap \textrm{Ker}\big([A,D^k]\big)\Big)\cap \rr^{n} \setminus \{0\}.$$
We define
\begin{equation}\label{bn8}
x_0=\frac{2}{ \|A\|}(-1)^{j}Q^{1/2}D^{-1}A\zeta_0 \quad \textrm{ and }\quad  \xi_0=Q^{-1/2}\zeta_0 \neq 0.
\end{equation}
We first have $(x_0,\xi_0) \in \rr^{2n} \setminus \{0\}$. It follows from (\ref{a6}), (\ref{a6.4}) and (\ref{bn8}) that $(x_0,\xi_0) \in \textrm{Ker}(\textrm{Re }F_j)$ since 
\begin{equation}\label{rf1}
AQ^{1/2}\xi_0=A\zeta_0=\frac{1}{2}(-1)^{j}\|A\|  DQ^{-1/2}x_0
\end{equation}
and
\begin{multline}\label{rf2}
ADQ^{-1/2}x_0=\frac{2}{ \|A\|}(-1)^{j}ADQ^{-1/2}Q^{1/2}D^{-1}A\zeta_0\\
=\frac{2}{ \|A\|}(-1)^{j}A^2\zeta_0=2(-1)^{j+1} \|A\|\zeta_0
=2(-1)^{j+1}\|A\|Q^{1/2}\xi_0,
\end{multline}
as $A^2\zeta_0=-\|A\|^2\zeta_0$.
We also notice from (\ref{bn8}) that 
\begin{equation}\label{fr1}
\forall 0 \leq k \leq m, \quad [A,D^k]Q^{1/2}\xi_0=[A,D^k]\zeta_0=0
\end{equation}
and
\begin{multline*}
\forall 0 \leq k \leq m, \quad [A,D^k]DQ^{-1/2}x_0=\frac{2}{ \|A\|}(-1)^{j}[A,D^k]DQ^{-1/2}Q^{1/2}D^{-1}A\zeta_0\\
=\frac{2}{ \|A\|}(-1)^{j}(AD^kA\zeta_0-D^kA^2\zeta_0)=\frac{2}{ \|A\|}(-1)^{j}(A^2D^k\zeta_0-D^kA^2\zeta_0),
\end{multline*}
since $[A,D^k]\zeta_0=0$. By using that 
$$A^2D^k\zeta_0=-\|A\|^2D^k\zeta_0 \quad \textrm{and} \quad A^2\zeta_0=-\|A\|^2\zeta_0,$$
we obtain that 
\begin{equation}\label{fr2}
\forall 0 \leq k \leq m, \quad [A,D^k]DQ^{-1/2}x_0=2(-1)^{j+1}\|A\|(D^k\zeta_0-D^k\zeta_0)=0.
\end{equation}
By using that $X_0=(x_0,\xi_0) \in \textrm{Ker}(\textrm{Re }F_j)$, it follows from (\ref{nn1}), (\ref{a2}) and (\ref{a0}) that 
\begin{align*}
\tilde{X}_0=(\textrm{Im }F_j)X_0=& \ (\textrm{Re }F)(\|A\|(\textrm{Re }F)^{-1}(\textrm{Im }F)X_0+(-1)^{j+1}X_0) \\
=&\ (-1)^{j+1}(1+\|A\|^2)(\textrm{Re }F)X_0\\ =& \ \lv\begin{array}{cc}
 0& \frac{1}{2}(-1)^{j+1}(1+\|A\|^2)Q \\
  \frac{1}{8}(-1)^{j}(1+\|A\|^2)Q^{-1/2}D^2Q^{-1/2}  & 0    
\end{array}\rv X_0.
\end{align*}
By using again that $X_0=(x_0,\xi_0) \in \textrm{Ker}(\textrm{Re }F_j)$, it follows from (\ref{a6}) and (\ref{a6.4}) that 
$$\tilde{X}_0=(\textrm{Im }F_j)X_0=\frac{1+\|A\|^2}{4\|A\|} \lv\begin{array}{cc}
 Q^{1/2}ADQ^{-1/2}& 0 \\
 0 & Q^{-1/2}DAQ^{1/2}    
\end{array}\rv X_0.$$
In order to establish that 
$$X_0 \in \Big(\bigcap_{k=0}^m\textrm{Ker}\big(\textrm{Re }F_j(\textrm{Im }F_j)^k\big)\Big)\cap \rr^{2n},$$
it is therefore sufficient to check that 
$$\forall 0 \leq k \leq m, \quad (Q^{1/2}(AD)^kQ^{-1/2}x_0,Q^{-1/2}(DA)^kQ^{1/2}\xi_0) \in \textrm{Ker}(\textrm{Re }F_j).$$
According to (\ref{a6}) and (\ref{a6.4}), it is sufficient to check that for all $0 \leq k \leq m$,
$$AQ^{1/2}Q^{-1/2}(DA)^kQ^{1/2}\xi_0=\frac{1}{2}(-1)^{j}\|A\|  DQ^{-1/2}Q^{1/2}(AD)^kQ^{-1/2}x_0$$
and
$$ADQ^{-1/2}Q^{1/2}(AD)^kQ^{-1/2}x_0=2(-1)^{j+1}\|A\|Q^{1/2}Q^{-1/2}(DA)^kQ^{1/2}\xi_0,$$
that is, for all $0 \leq k \leq m$,
$$A(DA)^kQ^{1/2}\xi_0=\frac{1}{2}(-1)^{j}\|A\|  D(AD)^kQ^{-1/2}x_0=\frac{1}{2}(-1)^{j}\|A\|  (DA)^kDQ^{-1/2}x_0$$
and
$$(AD)^{k+1}Q^{-1/2}x_0=2(-1)^{j+1}\|A\|(DA)^kQ^{1/2}\xi_0.$$
It follows from (\ref{rf1}) and (\ref{rf2}) that for all $0 \leq k \leq m$,
$$A(DA)^kQ^{1/2}\xi_0=(AD)^kAQ^{1/2}\xi_0=\frac{1}{2}(-1)^{j}\|A\| (AD)^kDQ^{-1/2}x_0$$
and
$$(AD)^{k+1}Q^{-1/2}x_0=(AD)^{k}ADQ^{-1/2}x_0=2(-1)^{j+1}\|A\|(AD)^{k}Q^{1/2}\xi_0.$$
It is therefore sufficient to check that for all $0 \leq k \leq m$,
$$(AD)^kDQ^{-1/2}x_0=(DA)^kDQ^{-1/2}x_0$$
and
$$(AD)^{k}Q^{1/2}\xi_0=(DA)^kQ^{1/2}\xi_0.$$
To that end, we first establish by recurrence that 
\begin{equation}\label{fr5}
\forall 0 \leq 2k \leq m, \quad (AD)^{2k}Q^{1/2}\xi_0=(DA)^{2k}Q^{1/2}\xi_0=(-1)^k\|A\|^{2k}D^{2k}Q^{1/2}\xi_0,
\end{equation}
\begin{multline}\label{fr6}
\forall 0 \leq 2k+1 \leq m, \quad (AD)^{2k+1}Q^{1/2}\xi_0=(DA)^{2k+1}Q^{1/2}\xi_0\\
=\frac{1}{2}(-1)^{j+k}\|A\|^{2k+1}D^{2k+1}DQ^{-1/2}x_0.
\end{multline}
The formula (\ref{fr5}) trivially holds true when $2k=0$, whereas the formula (\ref{fr6}) holds true as well when $2k+1=1$, 
$$(AD)Q^{1/2}\xi_0=(DA)Q^{1/2}\xi_0=\frac{1}{2}(-1)^{j}\|A\|DDQ^{-1/2}x_0,$$
since $[A,D]Q^{1/2}\xi_0=0$ and $AQ^{1/2}\xi_0=\frac{1}{2}(-1)^{j}\|A\|DQ^{-1/2}x_0$ thanks to (\ref{rf1}) and (\ref{fr1}).
If we assume that (\ref{fr5}) holds for $0 \leq 2k \leq m$ and that $0 \leq 2k+1 \leq m$, we deduce from (\ref{rf1}) and (\ref{fr1}) that 
\begin{multline*}
(AD)^{2k+1}Q^{1/2}\xi_0=(AD)(AD)^{2k}Q^{1/2}\xi_0=(-1)^k\|A\|^{2k}(AD)D^{2k}Q^{1/2}\xi_0\\
=(-1)^k\|A\|^{2k}D^{2k+1}AQ^{1/2}\xi_0=\frac{1}{2}(-1)^{k+j}\|A\|^{2k+1}D^{2k+1}DQ^{-1/2}x_0
\end{multline*}
and 
\begin{multline*}
(DA)^{2k+1}Q^{1/2}\xi_0=(DA)(DA)^{2k}Q^{1/2}\xi_0=(-1)^k\|A\|^{2k}(DA)D^{2k}Q^{1/2}\xi_0\\
=(-1)^k\|A\|^{2k}D^{2k+1}AQ^{1/2}\xi_0=\frac{1}{2}(-1)^{k+j}\|A\|^{2k+1}D^{2k+1}DQ^{-1/2}x_0.
\end{multline*}
If we assume that (\ref{fr6}) holds for $0 \leq 2k+1 \leq m$ and that $0 \leq 2k+2 \leq m$, we deduce from (\ref{rf2}) and (\ref{fr2}) that 
\begin{multline*}
(AD)^{2k+2}Q^{1/2}\xi_0=(AD)(AD)^{2k+1}Q^{1/2}\xi_0=\frac{1}{2}(-1)^{j+k}\|A\|^{2k+1}AD^{2k+2}DQ^{-1/2}x_0\\
=\frac{1}{2}(-1)^{j+k}\|A\|^{2k+1}D^{2k+2}ADQ^{-1/2}x_0=(-1)^{k+1}\|A\|^{2k+2}D^{2k+2}Q^{1/2}\xi_0
\end{multline*}
and 
\begin{multline*}
(DA)^{2k+2}Q^{1/2}\xi_0=(DA)(DA)^{2k+1}Q^{1/2}\xi_0=\frac{1}{2}(-1)^{j+k}\|A\|^{2k+1}(DA)D^{2k+1}DQ^{-1/2}x_0\\
=\frac{1}{2}(-1)^{j+k}\|A\|^{2k+1}D^{2k+2}ADQ^{-1/2}x_0=(-1)^{k+1}\|A\|^{2k+2}D^{2k+2}Q^{1/2}\xi_0.
\end{multline*}
Secondly, we establish that 
\begin{multline}\label{fr7}
\forall 0 \leq 2k \leq m, \quad (AD)^{2k}DQ^{-1/2}x_0=(DA)^{2k}DQ^{-1/2}x_0\\
=(-1)^k\|A\|^{2k}D^{2k}DQ^{-1/2}x_0,
\end{multline}
\begin{multline}\label{fr8}
\forall 0 \leq 2k+1 \leq m, \quad (AD)^{2k+1}DQ^{-1/2}x_0=(DA)^{2k+1}DQ^{-1/2}x_0\\
=2(-1)^{j+k+1}\|A\|^{2k+1}D^{2k+1}Q^{1/2}\xi_0.
\end{multline}
The formula (\ref{fr7}) trivially holds true when $2k=0$, whereas the formula (\ref{fr8}) holds true as well when $2k+1=1$, 
$$(AD)DQ^{-1/2}x_0=(DA)DQ^{-1/2}x_0=2(-1)^{j+1}\|A\|DQ^{1/2}\xi_0,$$
since $[A,D]DQ^{-1/2}x_0=0$ and $ADQ^{-1/2}x_0=2(-1)^{j+1}\|A\|Q^{1/2}\xi_0$ thanks to (\ref{rf2}) and (\ref{fr2}).
If we assume that (\ref{fr7}) holds for $0 \leq 2k \leq m$ and that $0 \leq 2k+1 \leq m$, we deduce from (\ref{rf2}) and (\ref{fr2}) that 
\begin{multline*}
(AD)^{2k+1}DQ^{-1/2}x_0=(AD)(AD)^{2k}DQ^{-1/2}x_0=(-1)^k\|A\|^{2k}AD^{2k+1}DQ^{-1/2}x_0\\
=(-1)^k\|A\|^{2k}D^{2k+1}ADQ^{-1/2}x_0=2(-1)^{j+k+1}\|A\|^{2k+1}D^{2k+1}Q^{1/2}\xi_0
\end{multline*}
and 
\begin{multline*}
(DA)^{2k+1}DQ^{-1/2}x_0=(DA)(DA)^{2k}DQ^{-1/2}x_0=(-1)^k\|A\|^{2k}(DA)D^{2k}DQ^{-1/2}x_0\\
=(-1)^k\|A\|^{2k}D^{2k+1}ADQ^{-1/2}x_0=2(-1)^{j+k+1}\|A\|^{2k+1}D^{2k+1}Q^{1/2}\xi_0.
\end{multline*}
If we assume that (\ref{fr8}) holds for $0 \leq 2k+1 \leq m$ and that $0 \leq 2k+2 \leq m$, we deduce from (\ref{rf1}) and (\ref{fr1}) that 
\begin{align*}
& \ (AD)^{2k+2}DQ^{-1/2}x_0=(AD)(AD)^{2k+1}DQ^{-1/2}x_0\\
=& \ 2(-1)^{j+k+1}\|A\|^{2k+1}(AD)D^{2k+1}Q^{1/2}\xi_0
= 2(-1)^{j+k+1}\|A\|^{2k+1}D^{2k+2}AQ^{1/2}\xi_0\\
= & \ (-1)^{k+1}\|A\|^{2k+2}D^{2k+2}DQ^{-1/2}x_0
\end{align*}
and 
\begin{align*}
& \ (DA)^{2k+2}DQ^{-1/2}x_0=(DA)(DA)^{2k+1}DQ^{-1/2}x_0\\
= & \ 2(-1)^{j+k+1}\|A\|^{2k+1}(DA)D^{2k+1}Q^{1/2}\xi_0
=2(-1)^{j+k+1}\|A\|^{2k+1}D^{2k+2}AQ^{1/2}\xi_0\\
= & \ (-1)^{k+1}\|A\|^{2k+2}D^{2k+2}DQ^{-1/2}x_0.
\end{align*}
If $A \neq 0$, we have therefore proved that for all $m \in \nn$,
\begin{multline}\label{fr13}
\Big( \bigcap_{k=0}^{m}\textrm{Ker}
\big((A^2+\|A\|^2)D^{k}\big)\cap \textrm{Ker}
\big([A,D^k]\big)\Big)\cap \rr^{n}\neq\{0\} \\
\Rightarrow \Big(\bigcap_{k=0}^m\textrm{Ker}\big(\textrm{Re }F_j(\textrm{Im }F_j)^k\big)\Big)\cap \rr^{2n} \neq \{0\}.
\end{multline}
Then, we notice that the Cayley-Hamilton theorem
$$\forall A \in M_d(\rr), \quad \chi_A(A)=0,$$ 
where $\chi_A$ denotes the characteristic polynomial associated to the matrix $A$, implies that 
$$\forall m \in \nn, \quad A^m \in \textrm{Span}_{\rr}(I,A,...,A^{d-1}).$$
It implies in particular that 
\begin{multline}\label{fr12}
\forall m \geq n-1, \quad \Big( \bigcap_{k=0}^{m}\textrm{Ker}
\big((A^2+\|A\|^2)D^{k}\big)\cap \textrm{Ker}
\big([A,D^k]\big)\Big)\cap \rr^{n}\\
=\Big( \bigcap_{k=0}^{n-1}\textrm{Ker}
\big((A^2+\|A\|^2)D^{k}\big)\cap \textrm{Ker}
\big([A,D^k]\big)\Big)\cap \rr^{n}
\end{multline}
and, for all $m \geq 2n-1$, 
\begin{equation}\label{fr11}
\Big(\bigcap_{k=0}^m\textrm{Ker}\big(\textrm{Re }F_j(\textrm{Im }F_j)^k\big)\Big)\cap \rr^{2n}=\Big(\bigcap_{k=0}^{2n-1}\textrm{Ker}\big(\textrm{Re }F_j(\textrm{Im }F_j)^k\big)\Big)\cap \rr^{2n}.
\end{equation}
The proof of Lemma~\ref{lem1.1} is complete when gathering (\ref{fr14}), (\ref{fr13}), (\ref{fr12}) and (\ref{fr11}).

\end{document}